%
%
%
%
%
%
%
%

%
\documentclass[smallextended,numbook,runningheads]{svjour3}     
\smartqed  
\usepackage{graphicx}
\usepackage{amsmath}
\usepackage{epstopdf} 
\usepackage{mathptmx}      
\usepackage{amssymb}
\usepackage{bm}
\usepackage{hyperref}
\usepackage{xcolor}

\newcommand{\newtext}[1]{\textcolor{black}{#1}}

\newcommand\esssupp{\text{ess supp}}


%
%
%
\journalname{BIT}
\begin{document}

\title{Well-posedness of the Stokes problem under modified pressure Dirichlet boundary conditions}

\titlerunning{Stokes well-posedness under pressure Dirichlet conditions}        

\author{Igor Tominec \and
        Josefin Ahlkrona \and
        Malte Braack 
}


\institute{I. Tominec \at
Stockholm University, Department of Mathematics, SE-106 91 Stockholm, Sweden \\
              \email{igor.tominec@math.su.se}           
           \and
           J. Ahlkrona \at
           Stockholm University,  Department of Mathematics, SE-106 91 Stockholm, Sweden \\
           \email{ahlkrona@math.su.se}
           \and
           M. Braack \at
           Kiel University, Mathematical Seminar, DE-24098 Kiel, Germany\\
           \email{braack@math.uni-kiel.de}
           }

\date{Received: date / Accepted: date}

\maketitle
\begin{abstract}
  This paper shows that the Stokes problem is well-posed when velocity and pressure simultaneously vanish on the domain boundary. This result is achieved by extending Nečas' inequality to square-integrable functions that vanish in a small band covering the boundary. It is found that the associated a priori pressure estimate depends inversely on the volume of the band. Numerical experiments confirm these findings. Based on these results, guidelines are provided for applying vanishing pressure boundary conditions in model coupling and domain decomposition methods.  
\keywords{Stokes problem \and inf-sup \and pressure Dirichlet boundary condition \and domain decomposition \and model coupling}
\subclass{65N12 \and 65N30 \and 65J20}
\end{abstract}

\section{Introduction}

\noindent
The Stokes problem is a model of viscous fluid flow given by:
\begin{equation} 
    \label{eq:pstokes_strong}
    \begin{aligned}
 - \Delta \bm u + \nabla p &= \bm f & \text{ on } \Omega, \\
        \nabla \cdot \bm u &= \bm 0 & \text{ on } \Omega,
    \end{aligned}
\end{equation}
where $\Omega \subset \mathbb{R}^d$ is an open and bounded Lipschitz-continuous domain, 
$\bm u: \Omega \to \mathbb{R}^d$ is the velocity field,  
$p: \Omega \to \mathbb{R}$ is the pressure field, and $\bm f: \Omega \to \mathbb{R}^d$ 
is the external force field. To obtain a unique solution continuously dependent on the problem data (well-posedness),
additional constraints on velocity and pressure are required. 
A classical choice of constraints is Dirichlet boundary conditions on velocity over $\partial\Omega$ 
and a zero average condition on pressure over $\Omega$. Under such conditions, the problem is well-posed \cite{Girault_and_Raviart,volker_john,brenner_scott_book}. 


There exist cases where it is desirable to instead set Dirichlet-type conditions for both velocity and pressure on the boundary. 
These conditions are typically not motivated by physics but are useful for instance in 
domain decomposition methods \cite{domain_decomposition_toselli2006}. 
In domain decomposition methods the computational domain is split into smaller subdomains. Stokes-type problem is then solved on each subdomain, where 
the subproblems are coupled through boundary conditions to recover the solution over the whole domain. 
\newtext{For instance, \cite[Section 4.1]{PAVARINO200035} studies an overlapping Schwarz-type domain decomposition method, where Dirichlet values are prescribed in velocity and pressure on each subdomain.
Another example is adaptive PDE model refinement methods \cite{malte_model_refinement,oden_model_refinement,ISSM_iscal}, which integrate solutions from models of varying physical fidelity to optimize computational efficiency. For instance, in \cite{iscal} the Shallow Ice Approximation (SIA) model is employed across the entire domain, followed by refining the solution in regions  where the physics lacks accuracy, by solving the more detailed Stokes problem over a subdomain. The Dirichlet velocity and pressure boundary conditions for this subdomain are obtained from the SIA solution, which couples the solutions over the subdomain and the rest of the domain. }
In this paper, we analyze these boundary conditions from a mathematical perspective and propose potential improvements.


\newtext{A potential use case are multigrid methods on hierarchically refined meshes for finite element methods \cite{becker_braack_multigrid}. In these methods, multigrid smoothing (local smoothing) is performed on each refinement level to compute locally defined solutions. The local smoothing problem uses Dirichlet boundary conditions that couple to the surrounding local solutions and by that ensure coupling across all refinement levels. }

To our knowledge, the well-posedness of the Stokes problem under velocity and pressure Dirichlet conditions has not yet been 
studied. Gresho and Sani \cite{GreshoSani1987} note that while pressure boundary conditions are commonly used, they should be avoided when possible.
In \cite{Conca,BERTOLUZZA201758,bernardi:hal-00961653,seloula:hal-00686230} 
$p|_{\partial\Omega} = 0$ is combined with conditions on $\bm u \times \bm n$, where $\bm n$ is an outward normal to $\partial\Omega$. 
Without these modifications, $p|_{\partial\Omega} = 0$ does not lead to well-posedness as the pressure is generally an 
$L^2(\Omega)$ (discontinuous) function, and 
the trace operator from $L^2(\Omega)$ to $L^2(\partial \Omega)$ is not well defined. 
For this reason, we propose relaxed Dirichlet-type boundary conditions:
\begin{equation}
    \label{eq:bc_u_p_homogenous_reformulated}
\bm u\big|_{\partial\Omega} = 0 \text{  and  } p|_\Delta = 0,
\end{equation}
and prove the Stokes problem well-posedness. 
Here, $\Delta \subset \Omega$ is a band with a small thickness, 
placed around $\partial\Omega$ as shown in Figure \ref{fig:intro:domains_delta}. 
As $|\Delta| \to 0$ the pressure part of the modified boundary conditions is approaching the $p|_{\partial\Omega} = 0$ case. 
\begin{figure}[t]
    \centering
\begin{tabular}{ccc}
    \multicolumn{3}{c}{\textbf{Domain $\bm{\Omega}$ and the embedded $\bm{\Delta}$ regions}} \\
    \hspace{-0.3cm}\textbf{Region $\bm \Delta = \bm \emptyset $} &
    \textbf{Region $\bm \Delta$ along boundary} &
    \textbf{Region $\bm \Delta$ in interior} 
    \\
    \includegraphics[width=0.25\linewidth]{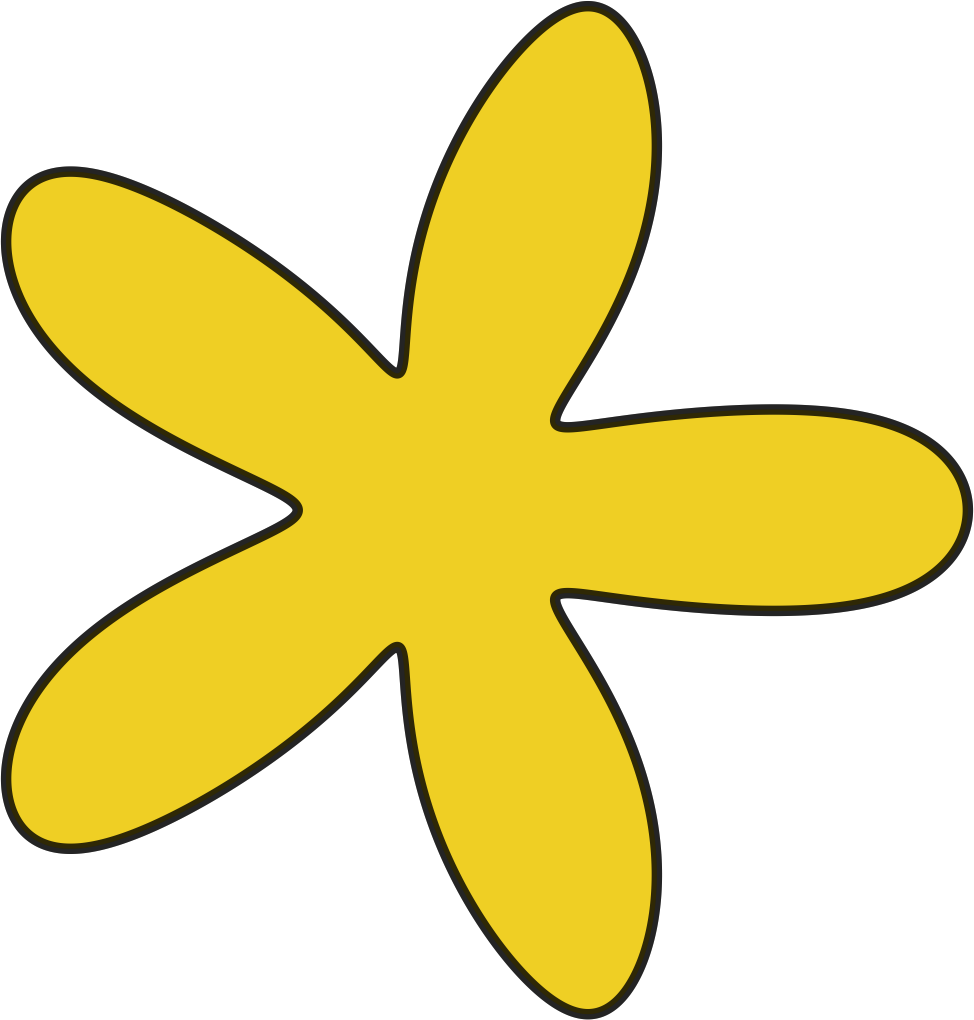} &
    \includegraphics[width=0.25\linewidth]{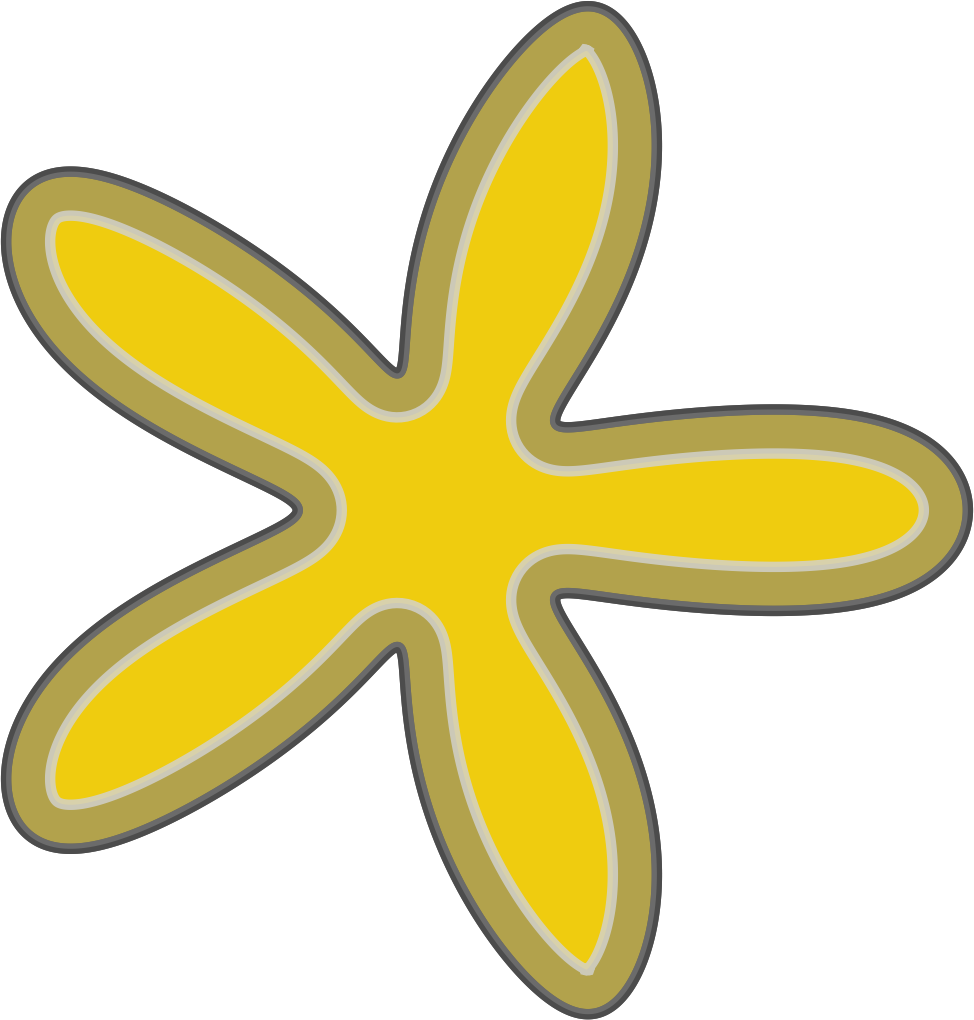} &
    \includegraphics[width=0.25\linewidth]{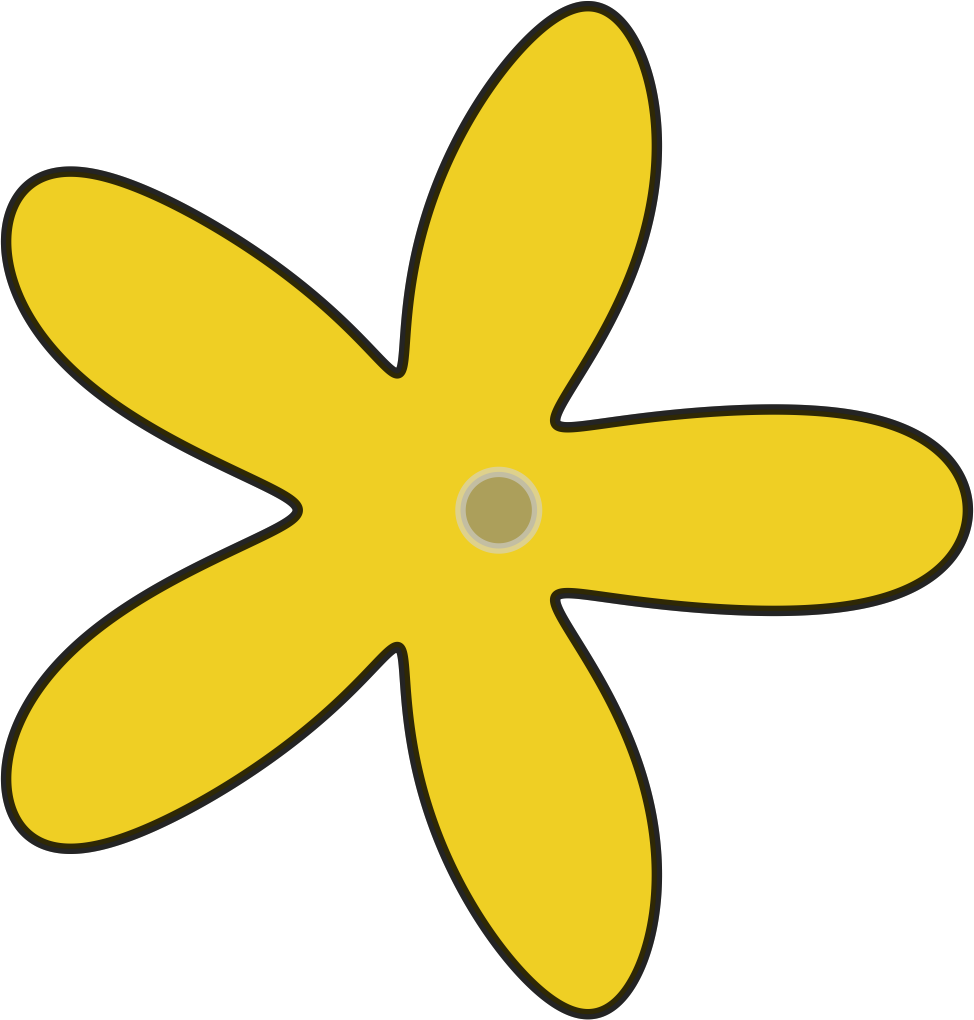}
\end{tabular}
\caption{The flower shaped domain is $\Omega$. Different choices of region $\Delta \subset \Omega$ are shaded using grey color. In the three images from left to right: $\Delta$ is a zero set, $\Delta$ is a small band around the boundary of $\Omega$, and $\Delta$ is a small patch in the interior of $\Omega$.}
\label{fig:intro:domains_delta}
\end{figure}

Our goal is to establish mathematical limitations associated with using boundary conditions \eqref{eq:bc_u_p_homogenous_reformulated}. To this end, we provide guidance on selecting the size of 
 $\Delta$ by deriving an a priori pressure estimate. Specifically, we extend the Nečas inequality \cite{Girault_and_Raviart} from $L^2(\Omega)$
pressure functions with zero average to those that vanish over a subset $\Delta \subset \Omega$.
While this extension is a secondary result, it is novel in itself.
We leverage the extended Nečas inequality to establish the inf-sup property of the Stokes problem’s bilinear form involving the pressure term. Our theoretical findings hold in infinite-dimensional solution spaces. To complement these results, we present a set of numerical experiments where $\bm u$ and $p$
are discretized using quadratic and linear Lagrange finite elements, respectively (Taylor-Hood elements).

Our results are also applicable to understanding the effects of a common practice: setting $p=0$ 
in one point on $\partial\Omega$ or one point inside $\Omega$ as a replacement for imposing a zero average pressure condition. 
The latter fits the generalized boundary condition formulation 
\eqref{eq:bc_u_p_homogenous_reformulated} with a small $\Delta$-ball placed in the domain interior as shown in the right-most plot of Figure \ref{fig:intro:domains_delta}.

\newtext{In Section \ref{sec:preliminaries}, we outline the mathematical preliminaries. We state the problem formulation in Section \ref{sec:problem_formulation}. 
In Section \ref{sec:discussion_require_p_on_bnd} we motivate imposing $p|_\Delta = 0$ instead of $p|_{\partial\Omega} = 0$.
In Section \ref{sec:theory_overview} we provide an overview of steps leading to the problem well-posedness. 
In Section \ref{section:operators_P_and_T} and in Section \ref{section:Stokes_wellposedness_pressure_L_2_00} we extend the Nečas inequality to $L^2_\Delta(\Omega)$, and then show the Stokes problem well-posedness when $p \in L^2_\Delta(\Omega)$. Furthermore, we prove the $L^1$-norm velocity divergence estimate.
We numerically verify all the estimates in Section \ref{section:numerical_experiments}. 
In Section \ref{sec:guidelines} we use our results to provide guidelines on imposing Dirichlet-type pressure boundary conditions.
In Section \ref{sec:finalremarks} we give our final remarks.}

\section{Problem formulation}\label{sec:problem_formulation0}
\subsection{Preliminaries: function spaces and norms}\label{sec:preliminaries}

Throughout this paper, we conduct the theoretical analysis of the Stokes problem \eqref{eq:pstokes} in infinite-dimensional function spaces defined on an open, bounded, and Lip\-schitz-continuous domain 
 $\Omega \subset \mathbb{R}^d$. The space $L^2(\Omega)$  is the standard Lebesgue space of square-integrable functions over $\Omega$
with inner product $(\cdot,\cdot)$ and norm $\|\cdot\|_{L^2(\Omega)}^2$, see e.g. \cite{guermond_voli}.
 A key subspace of $L^2(\Omega)$  is the space of functions with zero average, defined as
\begin{equation}
\label{eq:zeroaverage_pressure_space}
   L^2_0(\Omega) = \Big\{ q \in L^2(\Omega) \Big| \int_\Omega q\, d\Omega = 0 \Big\}.
\end{equation}
The central functional space in this work is $L^2_\Delta(\Omega)$, which consists of functions in $L^2(\Omega)$ 
 that vanish on an open subset $\Delta \subset \Omega$:
 \begin{equation}
    \label{eq:essentially_supported_pressure_space}
    L^2_\Delta(\Omega) = \left\{q \in L^2(\Omega)\, \Big|\, \esssupp(q) = \overline{\Omega\backslash \Delta}
    \right\}.
\end{equation}
Here, $\esssupp(q)$ is essential support of $q$ defined as:
\begin{equation}
    \label{eq:essential_support}
    \esssupp(q) = \overline{\Omega \backslash \bigcup \left\{ B_\varepsilon \subseteq \Omega: B_\varepsilon \text{ is open and } q=0 \text{ almost everywhere in } B_\varepsilon \right\}},
\end{equation}
where $B_\varepsilon$ is an open ball with radius $\varepsilon>0$. 
The space $\bm H^1(\Omega)$ and its restriction to vanishing velocities over $\partial\Omega$ are respectively 
defined through:
\begin{equation*}
    \begin{aligned}
\bm H^1(\Omega) &= \Big\{ \bm v \in [L^2(\Omega)]^d \, \Big|\, \|\bm v\|_{L^2(\Omega)}^2 + \|\nabla \bm v\|_{L^2(\Omega)}^2 < \infty \Big\}, \\
\bm H^1_0(\Omega) &= \Big\{ \bm v \in \bm H^1(\Omega) \, \Big|\, \bm v|_{\partial\Omega} = 0 \Big\}.
    \end{aligned}
\end{equation*}
$H^{-1}(\Omega)$ is a dual space to $H^1_0(\Omega)$  with inclusions $H^1_0(\Omega) \subset L^2(\Omega) \subset H^{-1}(\Omega)$ \cite{Evans_book}. An 
associated norm is defined as: 
$$\|q\|_{H^{-1}(\Omega)} = \sup_{\bm v \in \bm H^1_0(\Omega)}\, \frac{(q, \bm v)_{H^{-1}(\Omega), H^1_0(\Omega)}}{\|\bm v\|_{H^1_0(\Omega)}},$$
where $(q, \bm v)_{H^{-1}(\Omega), H^1_0(\Omega)}$ is the dual pairing between $H^{-1}(\Omega)$ and $H^1_0(\Omega)$.

In Section \ref{section:numerical_experiments},  we compute solutions to the Stokes problem using the finite element method with Taylor-Hood elements. 
The corresponding finite-dimensional velocity and pressure spaces, $\bm{V_h} \subset \bm H^1_0(\Omega)$ 
and $Q_h \subset L^2(\Omega)$ are defined as follows:
\begin{equation*}
    \begin{aligned}
\bm{V_h} &= \Big\{ \bm v_h \in \bm H^1_0(\Omega) \, \Big|\, \bm v_h \big |_{K} \in \mathbb{P}_2(K),\, \forall K \in \mathcal T_h \Big\}, \\
Q_h &= \Big\{ q_h \in L^2(\Omega) \, \Big|\, q_h \big |_{K} \in \mathbb{P}_1(K),\, \forall K \in \mathcal T_h \Big\}.
    \end{aligned}
\end{equation*}
Here, $\mathbb{P}^2$ and $\mathbb{P}^1$ denote multivariate ($d$ dimensional) polynomial spaces of order $2$ and $1$ respectively,
defined over each element $K$ of a mesh $\mathcal T_h \subset \Omega$. 
 In the numerical experiments, the pressure space The $Q_h$ is further intersected with either $L^2_\Delta(\Omega)$ or $L^2_0(\Omega)$
 to enforce  the desired pressure conditions.

\subsection{Weak form of Stokes problem}\label{sec:problem_formulation}

The weak formulation of the Stokes problem \eqref{eq:pstokes_strong} 
seeks $\bm u \in \bm V:= \bm H^1_0(\Omega)$ (velocity space) and $p \in Q:=L^2_\Delta(\Omega)$ 
(pressure space) such that:
\begin{equation} 
    \label{eq:pstokes}
    \begin{aligned}
        (\nabla \bm u, \nabla \bm v) - (p, \nabla \cdot \bm v)  &= (\bm f, \bm v), & \forall \bm v &\in \bm V, \\
        (\nabla \cdot \bm u, q) &= 0, & \forall q &\in Q,
    \end{aligned}
\end{equation}
where $(\bm v, q)$ are the test functions, and $\bm f \in \bm V'$ (velocity dual space) is a forcing function. 
In abstract form, the problem is to find $\bm u \in \bm V$ and $p \in Q$ such that:
\begin{equation}
    \label{eq:mixed_problem}
\begin{aligned}
a(\bm u, \bm v) + b(\bm v, p) &= (\bm f,\bm v),\,& \forall \bm v &\in \bm V,\\
b(\bm u, q) &= 0,\,& \forall q &\in Q,
\end{aligned}
\end{equation}
where $a(\bm u, \bm v)$ and $b(\bm v, p)$ are bilinear forms on $\bm V \times \bm V$ and $\bm V \times Q$, respectively.
The well-posedness of the above problem is given by the following theorem.
\begin{theorem}[Babuška-Brezzi, Theorem 49.3 in \cite{guermond_volii}]
    \label{theorem:babuska_brezzi}
Problem \eqref{eq:mixed_problem} is well-posed if:
\begin{subequations}
\begin{align}
    a(\bm u, \bm u) &\geq C_A \|\bm u\|_{\bm V},\,& \bm u \in \bm V, \label{eq:problem_formulation:a_coercive}\\
    a(\bm u, \bm v) &\leq C_B \|\bm u\|_{\bm V}\, \|\bm v\|_{\bm V}\, & \bm u \in \bm V,\, \bm v \in \bm V, \label{eq:problem_formulation:a_continuous}\\
    b (\bm u, q) &\leq  C_C \|\bm u\|_{V}\, \|q\|_{Q}\, & \bm u \in \bm V,\, q \in Q, \label{eq:problem_formulation:b_continuous}\\
    \sup_{\bm v \in \bm V} \frac{b(\bm u, q)}{\|\bm u\|_{\bm V}} &\geq C_D\, \|q\|_{Q}\, & q \in Q. \label{eq:problem_formulation:b_infsup}
\end{align}
\end{subequations}
\end{theorem}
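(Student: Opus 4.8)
The plan is to follow the classical saddle-point argument, splitting the problem into a velocity part solved on the constraint kernel and a pressure part recovered through the inf-sup condition. First I would recast the two bilinear forms as operators: let $A : \bm V \to \bm V'$ be given by $\langle A \bm u, \bm v \rangle = a(\bm u, \bm v)$, let $B : \bm V \to Q'$ be given by $\langle B \bm v, q \rangle = b(\bm v, q)$, and let $B^* : Q \to \bm V'$ be its adjoint, so that \eqref{eq:mixed_problem} reads $A \bm u + B^* p = \bm f$ and $B \bm u = 0$. I would then introduce the kernel $\bm V_0 = \ker B = \{ \bm v \in \bm V : b(\bm v, q) = 0 \ \forall q \in Q \}$, which is a closed subspace of $\bm V$ because $b$ is continuous by \eqref{eq:problem_formulation:b_continuous}.

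Second, I would solve the reduced problem on the kernel. Since the constraint equation forces $\bm u \in \bm V_0$, and since $a$ is coercive by \eqref{eq:problem_formulation:a_coercive} and continuous by \eqref{eq:problem_formulation:a_continuous} on the Hilbert space $\bm V_0$, the Lax-Milgram theorem yields a unique $\bm u \in \bm V_0$ with $a(\bm u, \bm v) = \langle \bm f, \bm v \rangle$ for all $\bm v \in \bm V_0$, together with the bound $\|\bm u\|_{\bm V} \leq C_A^{-1} \|\bm f\|_{\bm V'}$.

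Third, I would recover the pressure. The residual functional $\ell(\bm v) := \langle \bm f, \bm v \rangle - a(\bm u, \bm v)$ vanishes on $\bm V_0$ by construction, hence $\ell$ lies in the annihilator $\bm V_0^{\circ}$. The inf-sup condition \eqref{eq:problem_formulation:b_infsup} states precisely that $\|B^* q\|_{\bm V'} \geq C_D \|q\|_Q$, so $B^*$ is injective with closed range; identifying that range with $\bm V_0^{\circ}$ then shows $B^* : Q \to \bm V_0^{\circ}$ is an isomorphism. Consequently there is a unique $p \in Q$ with $B^* p = \ell$, that is $b(\bm v, p) = \ell(\bm v)$ for all $\bm v \in \bm V$, and the pair $(\bm u, p)$ solves \eqref{eq:mixed_problem}. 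The inf-sup bound also gives $\|p\|_Q \leq C_D^{-1} \|\ell\|_{\bm V'} \leq C_D^{-1}\left(1 + C_B C_A^{-1}\right) \|\bm f\|_{\bm V'}$, which together with the velocity estimate delivers the continuous dependence on the data that completes well-posedness.

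The main obstacle is the functional-analytic heart of the third step: turning the scalar inequality \eqref{eq:problem_formulation:b_infsup} into the operator statement that $B^*$ maps $Q$ isomorphically onto the \emph{full} annihilator $\bm V_0^{\circ}$. The bound from below gives injectivity and closedness of the range immediately, but identifying the range as exactly $\bm V_0^{\circ}$ requires the closed range theorem, or, in the present Hilbert setting, the orthogonal decomposition $\bm V = \bm V_0 \oplus \bm V_0^{\perp}$ combined with the Riesz representation. It is this surjectivity-onto-the-annihilator claim, rather than any of the routine Lax-Milgram or bounding steps, where the argument must be carried out with care.
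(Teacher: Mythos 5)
The paper does not actually prove this statement: it is imported verbatim as Theorem 49.3 of the cited Ern--Guermond volume and used as a black box, so there is no in-paper proof to compare against. Your argument is the classical Brezzi splitting proof---Lax--Milgram on the kernel $\bm V_0=\ker B$, then recovery of the pressure by showing $B^*$ is an isomorphism from $Q$ onto the annihilator $\bm V_0^{\circ}$ via the inf-sup bound and the closed range theorem---and it is correct, including your identification of the surjectivity-onto-the-annihilator step as the genuine crux; this is essentially the proof given in the cited reference. Two small points if you were to write it out in full: uniqueness of the pair $(\bm u,p)$ deserves its own two lines (test the homogeneous system with $\bm u_1-\bm u_2\in\bm V_0$, then use injectivity of $B^*$), and note that the hypotheses as stated in the paper are stronger than needed (coercivity of $a$ on all of $\bm V$ rather than only on $\bm V_0$), which your proof correctly exploits only on the kernel.
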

This framework provides the basis for analyzing the well-posedness of the Stokes problem when $\bm V=\bm H^1_0(\Omega)$  and $Q=L^2_\Delta(\Omega)$,
 as defined in \eqref{eq:essentially_supported_pressure_space}. The
boundary conditions \eqref{eq:bc_u_p_homogenous_reformulated} are  incorporated  within these spaces. 
The key properties of $\Delta$ are: 
(i) $|\Delta| > 0$, 
(ii) $\Delta$ is an open set,
 (iii) $\Omega \backslash \Delta$ is a closed set when $\Delta$ is a band around $\partial\Omega$.

\subsection{Vanishing pressure on a strip covering $\partial\Omega$.}\label{sec:discussion_require_p_on_bnd}

We now motivate the choice of the vanishing pressure space
$L^2_\Delta(\Omega)$, which enforces the condition $p|_\Delta = 0$, over an alternative $L^2(\Omega)$-based space that imposes
$p|_{\partial\Omega} = 0$. 
 The latter condition leads to a loss of well-posedness for problem \eqref{eq:pstokes}, particularly in terms of uniqueness.
This issue arises due to:\\
(i) the null space of the gradient operator present in \eqref{eq:pstokes}, given by ($-(p, \nabla \cdot \bm v) = (\nabla p, \bm v)$) 
contains constant functions $p^* \in \mathbb{R}(\Omega)$, meaning, $\nabla p^* = 0$.\\
(ii) Imposing $p^*|_{\partial\Omega} = 0$ does not necessarily eliminate constant solutions, as shown in the left plot of Figure \ref{fig:intro:piecewise_discontinuous_constants}. 
This is because the trace operator is not continuous in $L^2(\Omega)$, meaning that a function vanishing on $\partial\Omega$
can still be constant in $\Omega$,
leaving the null space unchanged and breaking uniqueness.

On the other hand, enforcing $p^*$ to satisfy the boundary conditions \eqref{eq:bc_u_p_homogenous_reformulated}, 
ensures that $p^*$ is not a constant function on $\Omega$, as illustrated in the right  plot of Figure \ref{fig:intro:piecewise_discontinuous_constants}. 
In this case, the derivative at the jump discontinuity inside $\Omega$ is a generalized Dirac delta function, which is measurable in space $H^{-1}(\Omega)$.
 As a result, the null space of the gradient operator no longer contains constant pressure functions. 
 However, while this is a necessary step toward well-posedness, it is not a sufficient condition on its own.


\begin{figure}[t]
    \centering
\begin{tabular}{ccc}
    $p|_{\partial\Omega} = 0$ &
    $p|_{\Delta} = 0$
    \\
    \hspace{-0.1cm}\includegraphics[width=0.4\linewidth]{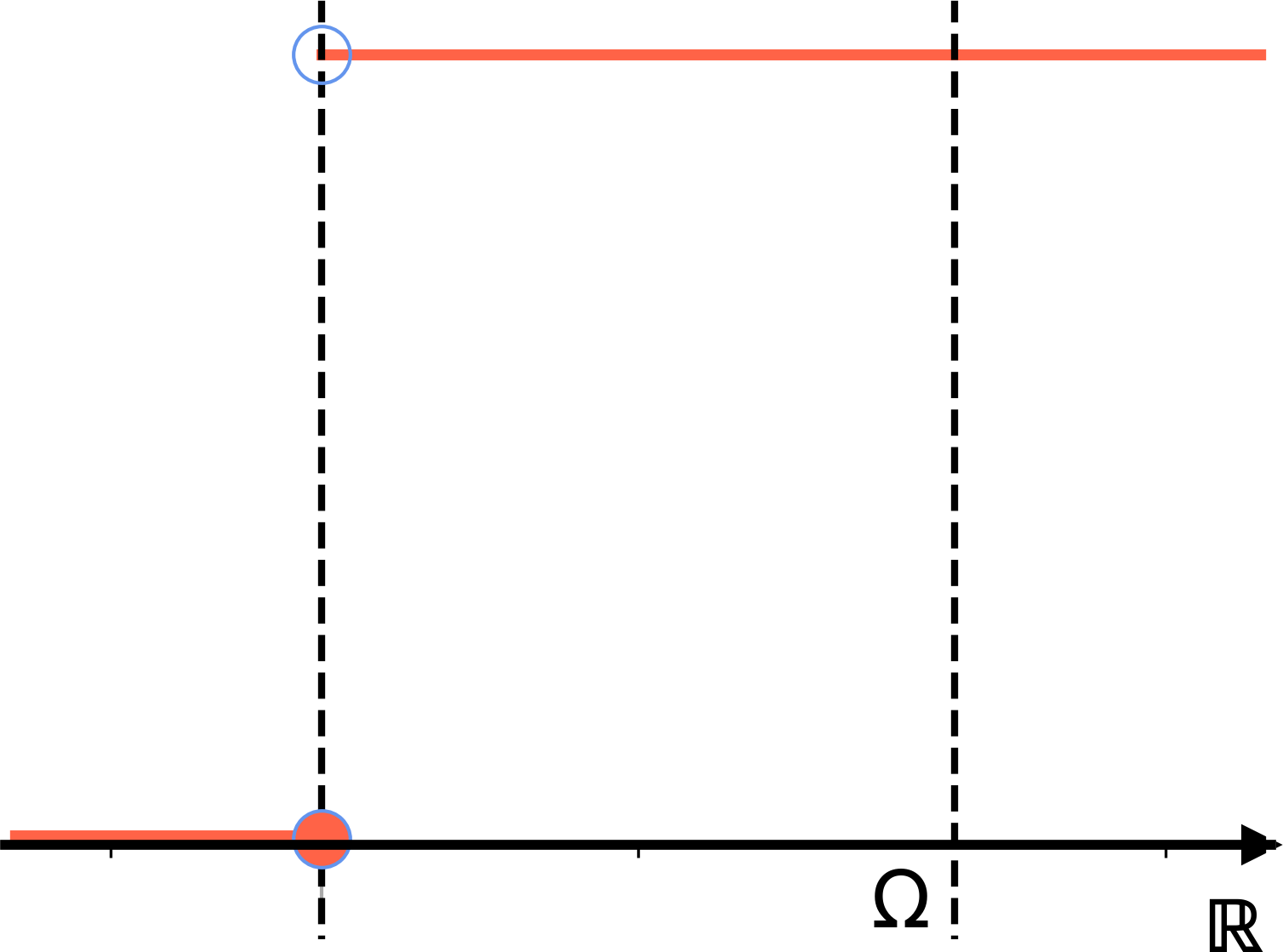} &
    \includegraphics[width=0.4\linewidth]{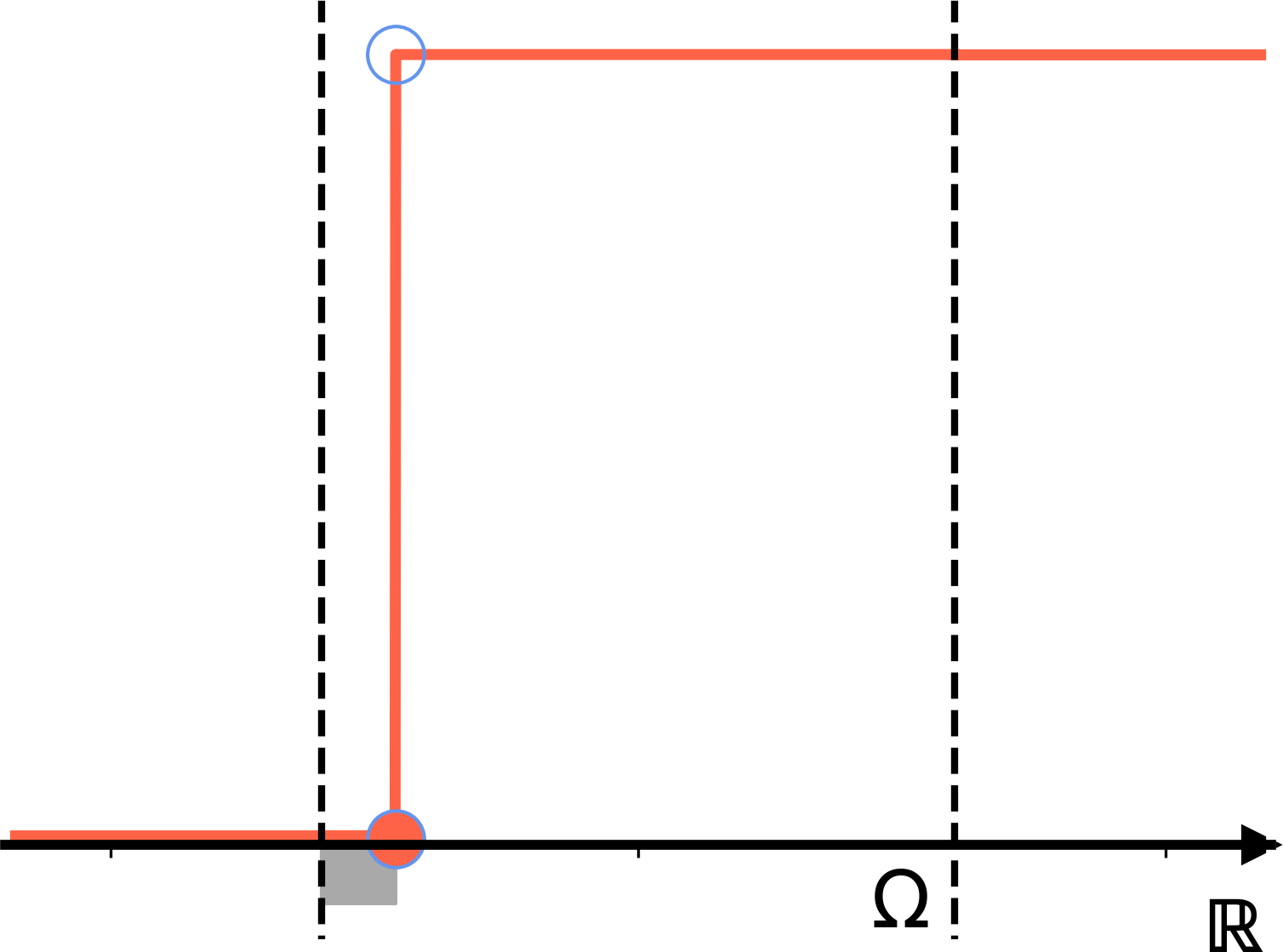}
\end{tabular}
\caption{Piecewise discontinuous constant pressure functions vanishing on one part of $\partial\Omega$ (left) and vanishing on the shaded area $\Delta \subset \Omega$ close to one part of $\partial\Omega$ (right), 
where $\Omega \subset \mathbb{R}$ is an open and bounded domain.}
\label{fig:intro:piecewise_discontinuous_constants}
\end{figure}

\section{Numerical analysis}
\subsection{An overview of the theoretical investigation}\label{sec:theory_overview}

When $\bm V = \bm H^1_0(\Omega)$ and $Q = L^2_0(\Omega)$, 
the Stokes problem  \eqref{eq:pstokes} is well-posed \cite{Girault_and_Raviart,volker_john,brenner_scott_book}, satisfying all the conditions of
Theorem \ref{theorem:babuska_brezzi}. 
In this case, the inf-sup (Babuška-Brezzi) condition \eqref{eq:problem_formulation:b_infsup} holds due to 
the inequality established by Nečas \cite{Girault_and_Raviart}:
\begin{equation}
    \label{eq:girault_raviart_inequality}
    \|p\|_{L^2(\Omega)} \leq C_1\, \|\nabla p\|_{H^{-1}(\Omega)},\qquad \forall p \in L_0^2(\Omega),
\end{equation} 
where $C_1$ depends only on $\Omega$. However, when  choosing 
$\bm V = \bm H^1_0(\Omega)$ and $Q =  L^2_\Delta(\Omega)$,  the well-posedness of \eqref{eq:pstokes} remains uncertain. 
Conditions \eqref{eq:problem_formulation:a_coercive}–\eqref{eq:problem_formulation:b_continuous} are satisfied, 
but the inf-sup condition \eqref{eq:problem_formulation:b_infsup} requires reassessment. 
To address this, we extend inequality \eqref{eq:girault_raviart_inequality} to all
$p \in L_\Delta^2(\Omega)$ and use  this extension to establish 
 \eqref{eq:problem_formulation:b_infsup} for $Q=L^2_\Delta(\Omega)$. 

We first demonstrate that 
$L^2_\Delta(\Omega)$ is complete and establish the following lower bound estimate:
\begin{equation}
    \label{eq:overview:lowerbound}
\|Tp\|_{L^2(\Omega)} \geq C_\Delta\, \|p\|_{L^2(\Omega)},\quad Tp \in L^2_0(\Omega),\, p \in L^2_\Delta(\Omega),
\end{equation}
where $T$ is  the average-subtracting operator, defined as $Tp = p - \frac{1}{|\Omega|} \int_\Omega p\, d\Omega$. 
Next, we combine \eqref{eq:overview:lowerbound}  with the Nečas inequality  \eqref{eq:girault_raviart_inequality} to  derive its extended form:
$$\|p\|_{L^2(\Omega)}\leq C_\Delta^{-1}\, C_1\, \|\nabla p \|_{H^{-1}(\Omega)},\quad \forall p \in L_\Delta^2(\Omega),$$
where $C_\Delta^{-1}$ depends on $\Delta$, and  in limiting cases where a sequence in $L^2_\Delta(\Omega)$ approaches a constant function,
$C_\Delta^{-1}\le |\Delta|^{-1/2}|\Omega|^{1/2}$.
Finally, we  proceed as follows: (i) use the extended Nečas inequality to establish the missing inf-sup condition \eqref{eq:problem_formulation:b_infsup} for $Q=L^2_\Delta(\Omega)$, 
(ii) apply Theorem \ref{theorem:babuska_brezzi} to argue for well-posedness, and  
(iii) derive the corresponding  well-posedness estimates.

The requirements for several theorems used in our theoretical argumentation are that the involved spaces be of Banach type. 
The spaces in question are $L^2_{0}(\Omega)$ from \eqref{eq:zeroaverage_pressure_space} and  $L^2_\Delta(\Omega)$
 from \eqref{eq:essentially_supported_pressure_space}. Both are subspaces of  $L^2(\Omega)$, so the  $L^2$ -norm is applicable to both spaces. 
\newtext{It is well-known that $L_0^2(\Omega)$, as a closed subspace of $L^2(\Omega)$, is complete. As of completeness of $L^2_\Delta(\Omega)$: for any $f\in L^2_\Delta(\Omega)$ we have the restriction $\tilde f:=f|_{\Omega\setminus\Delta}\in L^2(\Omega\setminus\Delta)$, and, vice-versa,
any $\tilde f\in L^2(\Omega\setminus\Delta)$ with trivial extension to $f\in L^2(\Omega)$ is
also in $L^2_\Delta(\Omega)$. Moreover, $\|\tilde f\|_{L^2(\Omega\setminus\Delta)}=\|f\|_{L^2(\Omega)}$, so that
the space $L^2_\Delta(\Omega)$ is isomorph to the complete
space $L^2(\Omega\setminus\Delta)$. Thus, $L^2_\Delta(\Omega)$ is complete.
}
\subsection{\newtext{Operators between $L^2$-subspaces}}\label{section:operators_P_and_T}

We use the operator $P$ to project an arbitrary function from $L^2(\Omega)$ to 
$L^2_\Delta(\Omega)$. Additionally, we use the operator $T$ to subtract the average of a  function from $L^2_\Delta(\Omega)$,
thereby projecting  it to $L^2_0(\Omega)$. Both operators are employed to extend the Nečas inequality \eqref{eq:girault_raviart_inequality} 
to the space $L^2_\Delta(\Omega)$. The definitions of these operators 
are stated below.

\begin{definition}[\newtext{Operator $P$ and $T$}]
    \label{definition:operator_P}{\em
    Let $I_\Delta$ be the indicator function w.r.t. $\Delta$.
    We define the projection $P : L^2(\Omega) \to L^2_\Delta(\Omega)$,   the average mapping $M: L^2_\Delta(\Omega) \to L^2(\Omega)$,
    and the average subtracting mapping $T: L^2_\Delta(\Omega) \to L^2_{0}(\Omega)$ by:
    \begin{eqnarray}
       Pq &:=& q-I_\Delta q,\label{eq:operator_P}\\
       Mq &:=& \frac{1}{|\Omega|} \int_{\Omega} q \, d\Omega,\label{eq:operator_M}\\
       Tq &:=& q -Mq.\label{eq:operator_T}
    \end{eqnarray}}
\end{definition}

We now outline the fundamental properties of these operators. The main goal in this section is to establish that 
$T$ is bounded from below, a result that will later be used to extend the Nečas inequality \eqref{eq:girault_raviart_inequality} to the case when 
$p \in L^2_\Delta(\Omega)$.
 
\begin{lemma}[\newtext{Properties of operator $T$}]
    \label{lemma:T_continuous}
    The linear operator $T: L^2_\Delta(\Omega) \to L_0^2(\Omega)$ given by (\ref{eq:operator_T}) is continuous and injective.
    For the adjoint holds $T^*=P$.
\end{lemma}
\begin{proof}
\emph{(i) $T$ is continuous.}
$M$ is continuous, since  
    $\| Mq \|_{L^2(\Omega)} \leq |\Omega|^{-1/2} \|Mq\|_{L^1(\Omega)}\leq \|q\|_{L^2(\Omega)}$ for  $q \in L^2_\Delta(\Omega)$.
    Thus $T$ is continuous.\\[2mm]
\emph{(ii) T is injective.} We notice that $Tq= 0$ iff
$q =M q$. But such $q$ is a constant. 
Taking $q \in L_\Delta^2$, $Tq=0$ is then possible only when $q=0$, as $L^2_\Delta(\Omega)$ by construction contains 
functions with essential support smaller than $\Omega$ (see \eqref{eq:essentially_supported_pressure_space}), but a constant 
function has an essential support equal $\Omega$. Thus, $T$ is injective.\\[2mm]
\noindent
\emph{(iii) Adjoint of $T$ is $P$.} For $f \in L^2_0(\Omega)'\simeq L_0^2(\Omega)$ it holds $Mf=0$. Hence, for $g\in L^2_\Delta(\Omega)$ we get
\begin{equation*}
\begin{aligned}
     \langle T^*f,g\rangle_{L^2_\Delta(\Omega)',L^2_\Delta(\Omega)} = (f,Tg) = (f,g)-(f,Mg) &= \int_\Delta fg\, d\Omega-|\Omega|\,Mf\, Mg \\
     &= \int_\Delta fg\, d\Omega =( Pf,g).
 \end{aligned}
\end{equation*}
This implies $T^*=P$.
\end{proof}

\begin{lemma}[\newtext{Properties of operator $P$}]
    \label{lemma:P_nullspace}
The null space of $P:L^2_0(\Omega) \to L^2_\Delta(\Omega)$, defined in (\ref{eq:operator_P}), and its orthogonal complement are given by:
$$N(P) = \{ f \in L_0^2(\Omega)\, |\,  f=0\text{ on } \Omega \backslash \Delta \},$$
    $$N(P)^\perp  =\{ g \in L_0^2(\Omega)\, |\, g|_{\Delta} = C\in\mathbb{R}\}.$$
\end{lemma}
\begin{proof}
\emph{(i) Nullspace of $P$.}
$Pf=0$ for  $f \in L^2_0(\Omega)$ iff $f = I_\Delta f$. This is equivalent to
$f=0$ on $\Omega \backslash \Delta$. \\[3mm]
\emph{(ii) Orthogonal complement-nullspace of $P$.}
The orthogonal complement takes the form: 
$N(P)^\perp = \{ g \in L_0^2(\Omega)\, |\, (g,f)=0\quad \forall f\in N(P) \}.$ 
Hence, $g\in N(P)^\perp$ is equivalent to
$$
   \int_\Delta fg\, d\Omega = 0\quad\forall f\in L^2_0(\Omega)\mbox{ with }\mbox{supp}\, f\subset \overline{\Delta}.
$$
This equality holds, iff $g\in L^2_0(\Delta)^\perp$. Because $L^2_0(\Delta)^\perp$
consists of all constant functions on $\Delta$, we obtain $N(P)^\perp  =\{ g \in L_0^2(\Omega)\, |\, g|_{\Delta} = C\in\mathbb{R}\}$.
\end{proof}

\begin{lemma}[\newtext{Range and kernel of $T$}] \label{lemma:T_range_closed}
The range of operator $T: L^2_\Delta(\Omega) \to L_0^2(\Omega)$ is given by
$$R(T) = \{g \in L^2_0(\Omega)\, |\, g = C \in \mathbb{R}\, \text{ on }\, \Delta\},$$
and is closed in $L_0^2(\Omega)$. The following lower bounded holds
\begin{eqnarray}\label{eq:Tinvcontinuous}
   \|Tf\|_{L^2(\Omega)} &\geq& C_\Delta\, \|f\|_{L^2(\Omega)}\quad\forall f\in L^2_\Delta(\Omega),
\end{eqnarray}
with a constant 
$C_\Delta \ge   (|\Delta|/|\Omega|)^{1/2}>0$.
\end{lemma}
\begin{proof}
\emph{(i) Range of T.} For any $f \in L^2_\Delta(\Omega)$,
$
   (Tf)|_\Delta = f|_\Delta - Mf = -Mf
$
is a constant.\\[2mm]
\emph{(ii) Range of T is closed.}
$T:L^2_\Delta(\Omega) \to L_0^2(\Omega)$ is linear and continuous. Since $T^*=P$, we have with Lemma \ref{lemma:T_continuous}
and \ref{lemma:P_nullspace}: $N(T^*)^\perp=N(P)^\perp=R(T)$. By the Closed Range theorem \cite{Yosida1978}, this implies that $R(T)$ is closed.\\[2mm]
\emph{(iii) T is bounded from below.}
As $T :L^2_\Delta(\Omega) \to R(T)$ is surjective by definition, and injective by Lemma \ref{lemma:T_continuous}), it is a continuous bijective
linear operator.  As a consequence of the Bounded Inverse Theorem \cite{Yosida1978},  the inverse $T^{-1}:R(T)\to L^2_\Delta(\Omega)$ is continuous, and \eqref{eq:Tinvcontinuous} follows, see \cite{ErnGuermond2004} (Lemma A.36).
Moreover, we get by elementary calculus the equality:
\begin{eqnarray*}
  \|Tf\|^2_{L^2(\Omega} &=& \|f - Mf\|^2_{L^2(\Omega} =  \|f\|^2_{L^2(\Omega)} - |\Omega||Mf|^2,
\end{eqnarray*}
and for the mean value the upper bound:
\begin{eqnarray*}
  |\Omega| |Mf|&\le& \|f\|_{L^1(\Omega\setminus\Delta)} \le 
  (|\Omega|-|\Delta|)^{1/2} \|f\|_{L^2(\Omega\setminus\Delta)} \\
  &\le& (1-|\Delta|/|\Omega|)^{1/2}\|f\|_{L^2(\Omega)}.
\end{eqnarray*}
This yields the lower bound:
\begin{eqnarray*}
   \|Tf\|^2_{L^2(\Omega} &\ge& (|\Delta|/|\Omega|)\|f\|_{L^2(\Omega)}^2.
\end{eqnarray*}
Hence, the constant $C_\Delta$ is bounded from below by:
$$
   C_\Delta \ge   (|\Delta|/|\Omega|)^{1/2}.
$$
\end{proof}

\subsection{\newtext{Well-posedness and stability of the Stokes problem}}\label{section:Stokes_wellposedness_pressure_L_2_00}

We now extend the Nečas inequality \eqref{eq:girault_raviart_inequality} from 
$L^2_0(\Omega)$ to $L^2_\Delta(\Omega)$.
\begin{theorem}[\newtext{Extended Nečas inequality}]
    \label{theorem:finalresult_necas_extended}
Let $\Omega$ be a bounded, Lipschitz domaint. Then
$$ \|f\|_{L^2(\Omega)} \leq C_\Delta^{-1} C_1\, \|\nabla f\|_{H^{-1}(\Omega)}\quad \forall f \in L^2_\Delta(\Omega),$$
Here $C_1>0$ only depends on $\Omega$, and $C_\Delta^{-1}$ depends on $\Delta$.
\end{theorem}
\begin{proof}
From Lemma \ref{lemma:T_range_closed} we have $\|f\|_{L^2(\Omega)} \leq C_\Delta^{-1}\, \| Tf \|_{L^2(\Omega)}$ with $ C_\Delta > 0$.
Since $Tf\in L^2_0(\Omega)$ we can apply
the original Nečas inequality \eqref{eq:girault_raviart_inequality} to obtain $\| Tf \|_{L^2(\Omega)}\le C_1\, \| \nabla (Tf) \|_{H^{-1}(\Omega)}$.
Since $Tf=f-Mf$ we obtain 
$$
\|f\|_{L^2(\Omega)} \leq C_\Delta^{-1}C_1\,(\| \nabla f\|_{H^{-1}(\Omega)} +\| \nabla(Mf) \|_{H^{-1}(\Omega)}).
$$
For $\bm v\in \bm V$ it holds $\langle \nabla Mf, \bm v\rangle = -(Mf, \nabla \cdot \bm v) = -Mf \int_{\partial\Omega} \bm v \cdot \bm n\, ds = 0$.
\end{proof}
Finally, we apply the extended Nečas inequality for $L^2_\Delta(\Omega)$
 from Theorem \ref{theorem:finalresult_necas_extended} to establish the well-posedness of the Stokes problem under the modified boundary conditions \eqref{eq:bc_u_p_homogenous_reformulated}. The result is stated in the theorem below.
 
\begin{theorem}[\newtext{Well-posedness}]
    \label{theorem:stokes_wellposedness_final}
Let $\Omega$ and $\Delta$ be bounded Lipschitz domains with $\Delta\subset\Omega$, $\partial\Omega\subset\overline\Delta$ and $|\Delta|>0$. 
The Stokes problem \eqref{eq:pstokes} is well-posed 
for $p \in L^2_\Delta(\Omega)$ and $\bm u \in \bm H_0^1(\Omega)$.
In addition, the following a priori estimates hold:
\begin{eqnarray*}
    \| p \|_{L^2(\Omega)} &\leq& 2\,C_\Delta^{-1}\, C_1\,\max(1,C_p^{-1}) \|\bm f \|_{H^{-1}(\Omega)} ,\\
   \| \bm u \|_{H_0^1(\Omega)}  &\leq& C_p^{-1}\, \|\bm f \|_{H^{-1}(\Omega)},
\end{eqnarray*}
where $C_\Delta > 0$ depends on $\Delta$, and $C_1,C_p>0$ are depending on $\Omega$.
\end{theorem}

\begin{proof}
We set $\bm V = \bm H^1_0(\Omega)$ and $Q = L^2_\Delta(\Omega)$. The 
bilinear form $a: \bm H^1_0(\Omega) \times \bm H^1_0(\Omega) \to \mathbb{R}$ in \eqref{eq:mixed_problem} associated with \eqref{eq:pstokes} is continuous and coercive \cite{guermond_volii}. 
The bilinear form $b: \bm H^1_0(\Omega) \times L^2_\Delta(\Omega) \to \mathbb{R}$ is also continuous. 
Thus, conditions \eqref{eq:problem_formulation:a_coercive} - \eqref{eq:problem_formulation:b_continuous} in Theorem \ref{theorem:babuska_brezzi} hold. 
We use the pressure gradient estimate from 
Theorem \ref{theorem:finalresult_necas_extended}, definition of 
the $H^{-1}(\Omega)$ norm, and then divide the resulting estimate by $C_\Delta^{-1}\, C_1$ to arrive for $p\in  L^2_\Delta(\Omega)$ at:
$$
   \sup_{\bm v \in \bm H_0^{1}} \frac{(\nabla p, \bm v)_{H^{-1}(\Omega), H^1_0(\Omega)}}{\| \bm v \|_{H_0^1}} = 
   \sup_{\bm v \in \bm H_0^{1}} \frac{-(p, \nabla \cdot \bm v)}{\| \bm v \|_{H_0^1}} \geq 
   C_\Delta\, C_1^{-1}\, \| p \|_{L^2(\Omega)}.
$$
This is the inf-sup condition \eqref{eq:problem_formulation:b_infsup} with the inf-sup constant $\gamma:=C_\Delta\, C_1^{-1}>0$. 
Thus, condition \eqref{eq:problem_formulation:b_infsup} is also established and \eqref{eq:pstokes} is well-posed with $\bm V = \bm H^1_0(\Omega)$ and $Q = L^2_\Delta(\Omega)$. 
Using the inf-sup condition above we have: 
$$
  \| p \|_{L^2(\Omega)}  \leq \gamma\, \sup_{\bm v \in \bm H_0^{1}} \frac{-(p, \nabla \cdot \bm v)}{\| \bm v \|_{H_0^1}}.
$$
We now substitute the $-(p, \nabla\cdot \bm v)$ term by using the Stokes problem \eqref{eq:pstokes} and make bounds to arrive at:
$$
   \| p \|_{L^2(\Omega)}  \leq \gamma \sup_{\bm v \in \bm H_0^{1}} \frac{- (\nabla \bm u, \nabla \bm v) + (\bm f, \bm v)}{\| \bm v \|_{H_0^1}}.
$$
Next, using the Cauchy-Schwarz inequality on all the numerator terms, and then 
$\|\nabla \bm v\|_{L^2(\Omega)} \leq \|\bm v\|_{\bm H^1_0(\Omega)}$ (and same for the $\bm u$ term)  we have:
\begin{equation}
    \label{theorem:pressure_wellposedness_equation_tmp1}
   \| p \|_{L^2(\Omega)}  \leq  \gamma \left( \|\bm u\|_{\bm H^1_0(\Omega)} + \|\bm f\|_{L^2(\Omega)} \right).
\end{equation}
Inserting $\bm v = \bm u$ to the Stokes problem \eqref{eq:pstokes}, using the incompressibility condition $\nabla \cdot \bm u = 0$ 
gives: $\|\nabla\bm u\|_{L^2(\Omega)}^2 =(\bm f,\bm u) \leq \|\bm f\|_{H^{-1}(\Omega)}\, \|\bm u\|_{\bm H^1_0 (\Omega)}$. 
With the Poincaré's inequality we arrive at $\|\bm u\|_{\bm H^1_0(\Omega)} \leq C_p\|\bm f\|_{H^{-1}(\Omega)}$. The latter relation proves the well-posedness estimate for the velocity vector field. 
Then, inserting the relation to \eqref{theorem:pressure_wellposedness_equation_tmp1} proves the well-posedness estimate for the pressure function.
\end{proof}

We now discuss the incompressibility (divergence-free) condition $\nabla \cdot \bm u = 0$ when solving the Stokes problem \eqref{eq:pstokes} 
under the boundary conditions \eqref{eq:bc_u_p_homogenous_reformulated}. Ideally, this condition holds pointwise, but this is generally 
not true when $\bm u \in \bm V = \bm H^1_0(\Omega)$. 

A key measure of incompressibility is the absolute average divergence, given by
 $|\int_\Omega \nabla \cdot \bm u|$.
We have  $|\int_\Omega \nabla \cdot \bm u\, d\Omega| = |\int_{\partial\Omega} \bm u \cdot \bm n\, ds| = 0,$
as long as $\bm u|_{\partial\Omega} = 0$, which is our assumption. This result is optimal for any choice of $|\Delta|>0$.

Key measures of divergence include its $L^1$- and $L^2$-norm. In the following Lemma \ref{lemma:divfree:divu_L2_DeltaOnly}, 
we establish that usually $\|\nabla \cdot \bm u\|_{L^2(\Omega)}> 0$. Building on this result, it follows that
$\|\nabla \cdot \bm u\|_{L^1(\Omega)} \le C |\Delta|^{1/2}$.

\begin{lemma}[Velocity divergence $L^1$- and $L^2$-norm]
    \label{lemma:divfree:divu_L2_DeltaOnly}
    Let $\bm u \in \bm H^1_0(\Omega)$ and $p \in L^2_\Delta(\Omega)$ be the solutions to the Stokes problem \eqref{eq:pstokes} over $\Omega$. Then 
    $\|\nabla \cdot \bm u\|_{L^2(\Omega\backslash\Delta)} = 0,$ and its equivalence:
\begin{equation}
    \label{eq:divfree:tmp1}
      \|\nabla \cdot \bm u\|_{L^2(\Omega)} = \|\nabla \cdot \bm u\|_{L^2(\Delta)}.
 \end{equation}   
    An a priori estimate of the velocity divergence in $L^1$-norm is:
\begin{equation}
    \label{eq:divfree:tmp2}
    \|\nabla \cdot \bm u\|_{L^1(\Omega)} \leq C_p |\Delta|^{1/2} \|\bm f\|_{H^{-1}(\Omega)}.
 \end{equation}   
\end{lemma}

\begin{proof}
\emph{(i) Divergence $L^2$-norm.}
We utilize the operator $P$ from Definition \ref{definition:operator_P} 
to pro\-ject $\nabla \cdot \bm u$ onto $L^2_\Delta(\Omega)$. 
Since $P(\nabla \cdot \bm u) \in L^2_\Delta(\Omega)$, we can use this as a possible test function in \eqref{eq:pstokes}. This  implies:
$$
\|\nabla \cdot \bm u\|_{L^2(\Omega)}^2 = (\nabla \cdot \bm u, \nabla \cdot \bm u) =  (\nabla \cdot \bm u, \nabla \cdot \bm u - P(\nabla \cdot \bm u)).
$$
With  $P(\nabla \cdot \bm u)|_\Delta=0$ on $\Delta$, and   $P(\nabla \cdot \bm u)|_\Delta=\nabla\cdot\bm u$ on $\Omega\setminus\Delta$ we obtain (\ref{eq:divfree:tmp1})
and $\|\nabla \cdot \bm u\|_{L^2(\Omega\backslash\Delta)} = 0$. \\[2mm]
\emph{(ii) Divergence $L^1$-norm.}
With (\ref{eq:divfree:tmp1}) follows
$$
\|\nabla \cdot \bm u\|_{L^1(\Omega\backslash\Delta)} \leq |\Omega\backslash\Delta|^{1/2}\, \|\nabla \cdot \bm u\|_{L^2(\Omega\backslash\Delta)} = 0.
$$
We further deduce 
$$
\|\nabla \cdot \bm u\|_{L^1(\Omega)} = \|\nabla \cdot \bm u\|_{L^1(\Delta)}  \leq |\Delta|^{1/2}\, \|\nabla \cdot \bm u\|_{L^2(\Delta)}.
$$
Finally, the bound (\ref{eq:divfree:tmp2}) follows with the stability of the velocity gradient (Theorem \ref{theorem:stokes_wellposedness_final}).
\end{proof}

\section{Numerical experiments}\label{section:numerical_experiments}

We present numerical experiments that validate our theoretical results on the well-posedness of the Stokes problem, as stated in Theorem \ref{theorem:stokes_wellposedness_final}.
 In all experiments, we discretize the Stokes problem \eqref{eq:pstokes} using the finite element method and perform computations with the open-source finite element library 
 FEniCS 2019 \cite{UFL,Alnaes2014,Alnaes2015}.
The velocity space  $\bm{V_h}$  and pressure space $Q_h$
are defined in Section \ref{sec:preliminaries}. All numerical simulations were conducted on a laptop equipped with an 
AMD Ryzen 7 PRO 6850U processor and 16 GB RAM.

\subsection{Constant $C_\Delta(|\Delta|)$ under $|\Delta|$-refinement when $p \in L^2_{\Delta}(\Omega)$}

In Lemma \ref{lemma:T_range_closed} we established that $T$ is bounded from below, i.e., that there exists 
$C_\Delta>0$ such that $\| Tp \|_{L^2(\Omega)} \geq C_\Delta\, \|p\|_{L^2(\Omega)}$ for each $p \in L^2_\Delta(\Omega)$. 
Furthermore, we showed that $C_\Delta$ has a lower bound $C_\Delta\sim|\Delta|^{1/2}$ which is the worst case decay as the volume $|\Delta|\to 0$. The largest possible constant $C_\Delta$ is given by
$$
   C_\Delta := \inf_{p \in L^2_\Delta(\Omega)} 
   \frac{\|Tp\|_{L^2(\Omega)}}{\|p\|_{L^2(\Omega)}}.
$$
In this section, we study the dependence of $C_\Delta$ on the volume of $|\Delta|$, from a numerical point of view. 
We compute $\|Tp\|_{L^2(\Omega)}/\|p\|_{L^2(\Omega)}$ for 
different $p \in L^2_\Delta(\Omega)$ 
as $|\Delta|$ is refined. In particular,
we choose five different functions $p_i=Pq_i$, $i=1,\ldots,5$, where $P$ is the projection operator defined in \eqref{eq:operator_P}, and
$q_i=q_i(x_1,x_2)$, are given by:
\begin{equation}
    \label{eq:experiments:constant:pressure_functions}
\begin{aligned}
    q_1 &\equiv 10^6,\quad q_2 \equiv 10^{-6},\quad q_3 = 7\,e^{-10 (x_1^2+x_2^2)}, \\
    q_4 &= (x_1 - 0.2)^2 + (x_2 - 0.2)^2)^{\frac{3}{2}},\quad q_5 = T \sin(2\pi\, x_1\, x_2).
\end{aligned} 
\end{equation}
\begin{figure}[t]
    \centering
    \begin{tabular}{cccc}
        \vspace{-0cm}\hspace{0.4cm}Large const.& Large const.$^*$ & \hspace{0.1cm}Cubic distance & Sine \vspace{-0.05cm}\\
        \hspace{0.1cm}$(p_1)$ & $(p_1)$ & $(p_4)$ &  $(p_5)$  \vspace{-0.05cm}\\
\hspace{-0.3cm}\includegraphics[width=0.26\linewidth]{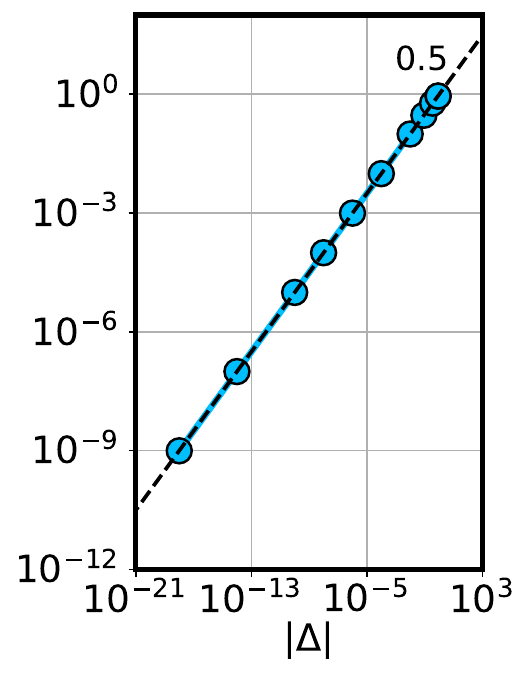} &
    \hspace{-0.65cm}\includegraphics[width=0.26\linewidth]{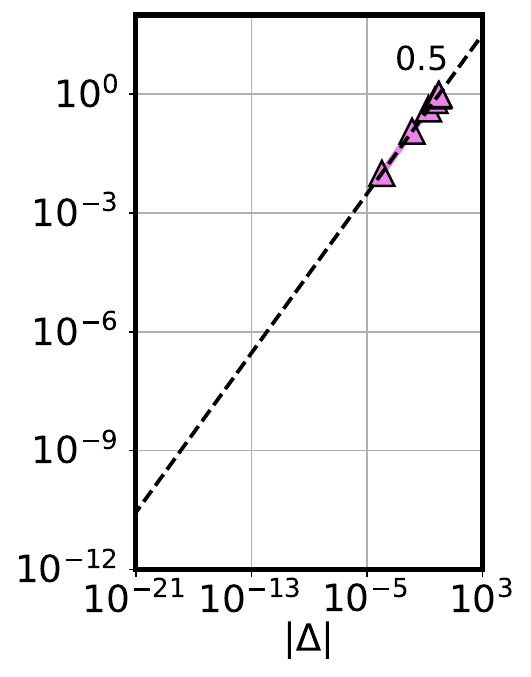} &
    \hspace{-0.32cm}\includegraphics[width=0.25\linewidth]{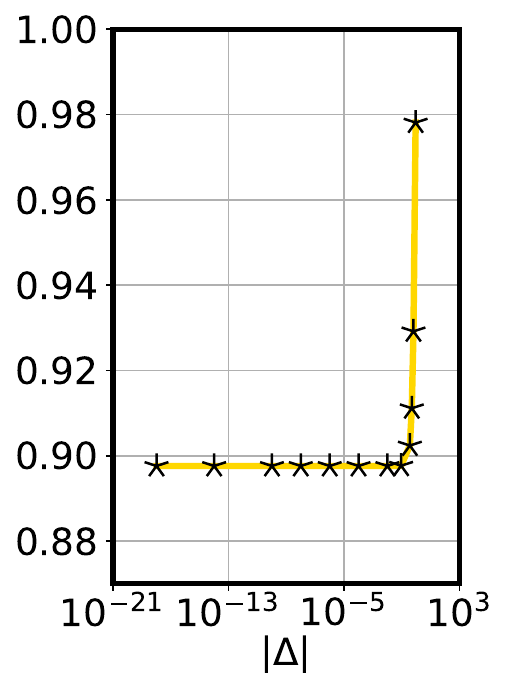} & 
    \hspace{-0.44cm}\includegraphics[width=0.26\linewidth]{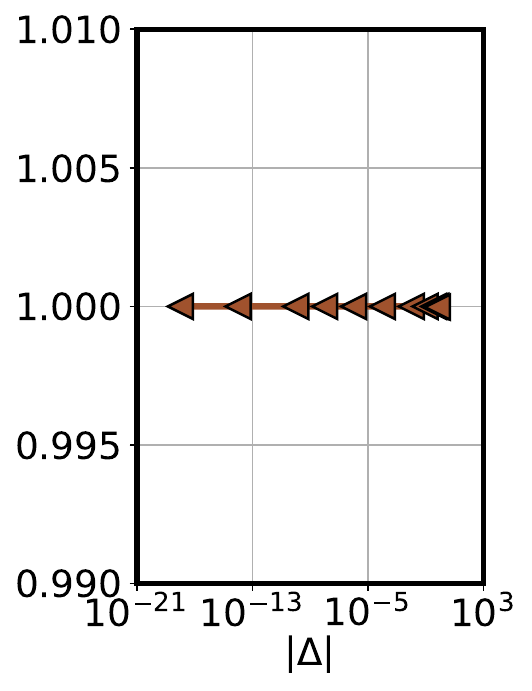} 
    \end{tabular}
    \caption{The value of constant $C = \|Tp\|_{L^2(\Omega)}/\|p\|_{L^2(\Omega)}$ as the volume $|\Delta|$ is refined, for three different example pressure functions \eqref{eq:experiments:constant:pressure_functions}. 
    In plots 1, 3, and 4 (from left to right) $\Delta \subset \Omega$ is a disk centred within a unit disk $\Omega$. 
    In plot 2 (from left to right) $\Delta \subset \Omega$ is a ring around $\partial\Omega$.}
    \label{fig:experiments:constant:constants}
\end{figure}

The key question concerns the decay rate of $C_\Delta$ as $|\Delta| \to 0$, with slower decay rates being preferable.
In most cases, we define $\Delta$ as a small disk inside $\Omega$, allowing us to compute function norms for extremely small $|\Delta|$  (e.g., $|\Delta| \approx 10^{-20}$). 
In one case, however, $\Delta$ is chosen as a ring around $\partial\Omega$, where the smallest considered volume is $|\Delta| \approx 10^{-5}$.

The results, shown in Figure \ref{fig:experiments:constant:constants}, indicate that $C_\Delta$ decays at a rate of $0.5$ for $p_1$
in both settings: when $\Delta$ is a small disk within $\Omega$ and when it forms a ring around $\partial\Omega$. 
This aligns with our worst-case estimate for $C_\Delta$ in Lemma \ref{lemma:T_range_closed}. A similar behaviour is observed for $p_2$. 
For $p_3$, $p_4$ and $p_5$, however, $C_\Delta$ 
exhibits stagnating decay as $|\Delta| \to 0$. Equivalent results for a one-dimensional $\Omega$ confirm these findings.

Both theoretical and numerical observations suggest that:
(i) as $|\Delta|\to 0$, $C_\Delta$ decays to $0$ at most at order $0.5$, 
(ii)  $C_\Delta$ is independent of the dimension.

\subsection{Condition numbers of the stiffness matrix under $|\Delta|$ and mesh refinement}

In this section, we analyze the condition numbers of the stiffness matrix when discretizing \eqref{eq:pstokes} using finite element methods.
We consider three cases where the pressure solution function is constrained to:
(i) have zero mean on $\Omega$, 
(ii) vanish over $\partial \Omega$, 
(iii) vanish over $\Delta \subset \Omega$. 
For $|\Delta|$-refinement, we define $\Delta$ as a circular region centred at $(1,0)$ within the unit disc. 
Under mesh refinement, $\Delta$ is instead chosen as a ring encompassing the triangles nearest to the disk boundary. 
The results are presented in Figure \ref{fig:experiments:condition_numbers}. 

\begin{figure}[t]
    \centering
    \begin{tabular}{ccc}
    \hspace{0.85cm}Mesh refinement & \hspace{1cm}$\Delta$-refinement \\
    \raisebox{0.14cm}{\includegraphics[width=0.32\linewidth]{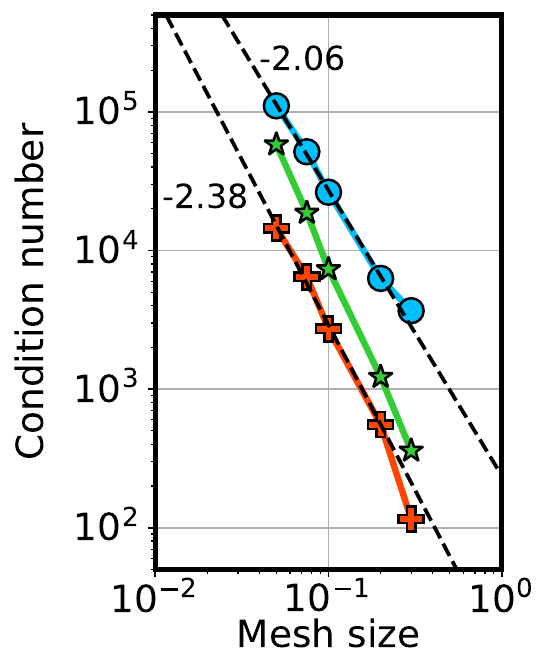}} &
    \includegraphics[width=0.3\linewidth]{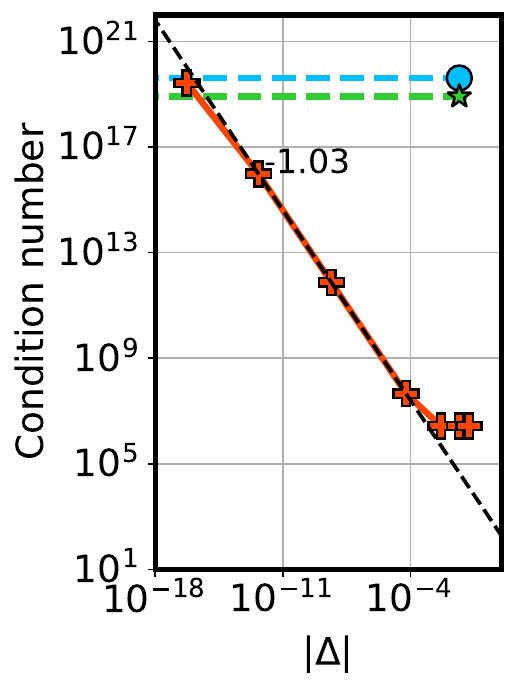} &
    \raisebox{3cm}{\includegraphics[width=0.2\linewidth]{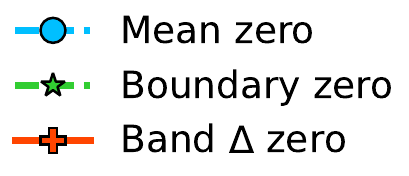}}
    \end{tabular}
    \caption{Condition numbers of the stiffness matrix, when the pressure solution function is constrained to: (i) have zero mean on $\Omega$ (label Mean zero), (ii) vanish over $\partial \Omega$ (label Boundary zero), (iii) vanish over $\Delta \subset \Omega$ (label Band $\Delta$ zero). 
    The condition numbers are plotted as a function of volume of $\Delta\subset \Omega$ when mesh size is fixed (right), 
    and as a function of mesh size (left). 
    In the right plot $\Delta$ is an area centred around a point $(1,0)$. In the left plot $\Delta$ is a ring containing triangles closest to domain boundary $\partial\Omega$.}
    \label{fig:experiments:condition_numbers}
\end{figure}

We observe that under $|\Delta|$-refinement, the condition number grows linearly (order 1) in case (iii). When  $|\Delta|$ is very small, its condition number is comparable to those in cases (i) and (ii).
Under mesh refinement, the condition number grows quadratically (order 2) in case (i), as expected. However, in case (iii), it grows at order 2.4, while in case (ii), it increases at order 2.8. Across the considered mesh sizes, case (iii) yields the smallest condition numbers, whereas case (i) results in the largest.
Overall, the condition numbers for cases (i) and (iii) are comparable. The exception is case (ii), where enforcing pressure vanishing on $\partial\Omega$
leads to significantly higher growth rates.

\subsection{Model coupling using the modified Dirichlet boundary conditions}

In this section, we numerically investigate the coupling between the Stokes model and an approximate fluid flow model. The two models share a domain interface and are connected through reformulated Dirichlet boundary conditions for velocity and pressure \eqref{eq:bc_u_p_homogenous_reformulated}.

The objective is to solve the original Stokes problem \eqref{eq:pstokes} over the subdomain
$\Omega_{\text{sub}} \subset \Omega$, while 
using $\bm \delta$-perturbed (inexact) Dirichlet boundary data. This data is obtained from the solution of a perturbed Stokes problem over the complementary subdomain $\Omega\setminus\Omega_{\text{sub}}$.
The subdomain setup is illustrated in Figure \ref{fig:experiments:domains_delta_coupling_setup}.
This setting models various fluid flow coupling problems, such as those in 
\cite{ISCALgrounding,ISSM_iscal,kuchta_stokes_darcy}.
We numerically verify the well-posedness estimate from Theorem \ref{theorem:stokes_wellposedness_final}, focusing on both mesh $(h)$ refinement and 
$|\Delta|$-refinement. Additionally, we explore numerical stability under different choices of $|\Delta|$ and $\bm \delta$. In some cases, we use the diameter of the $\Delta$
region, $D(\Omega)$, to compare stability when the pressure boundary condition is imposed on $\partial\Omega$ $(D(\Omega)=0)$ versus when it is imposed on $\Delta\subset\Omega$
with $|\Delta|>0$  $(D(\Delta)>0)$.

\begin{figure}[b]
    \centering
    \begin{tabular}{cc}
        \includegraphics[width=0.33\linewidth]{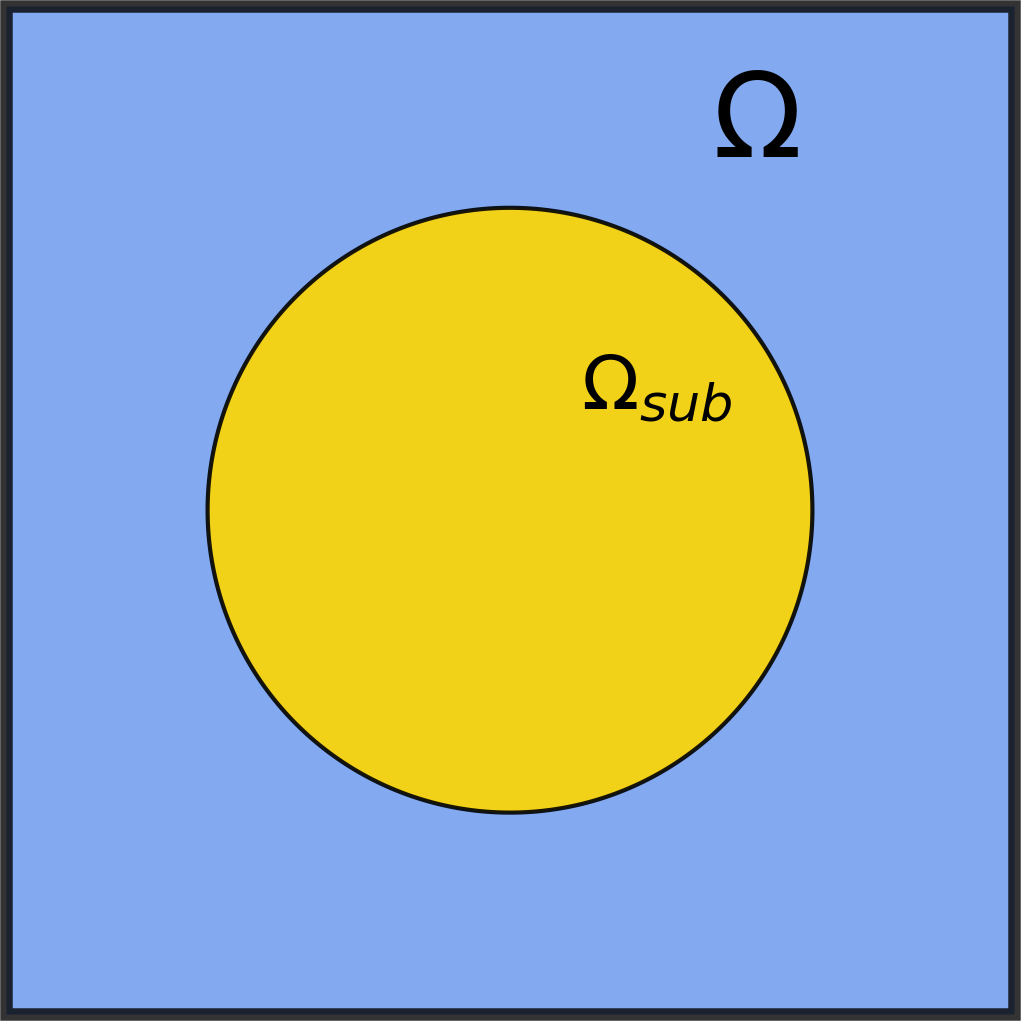} &
        \includegraphics[width=0.33\linewidth]{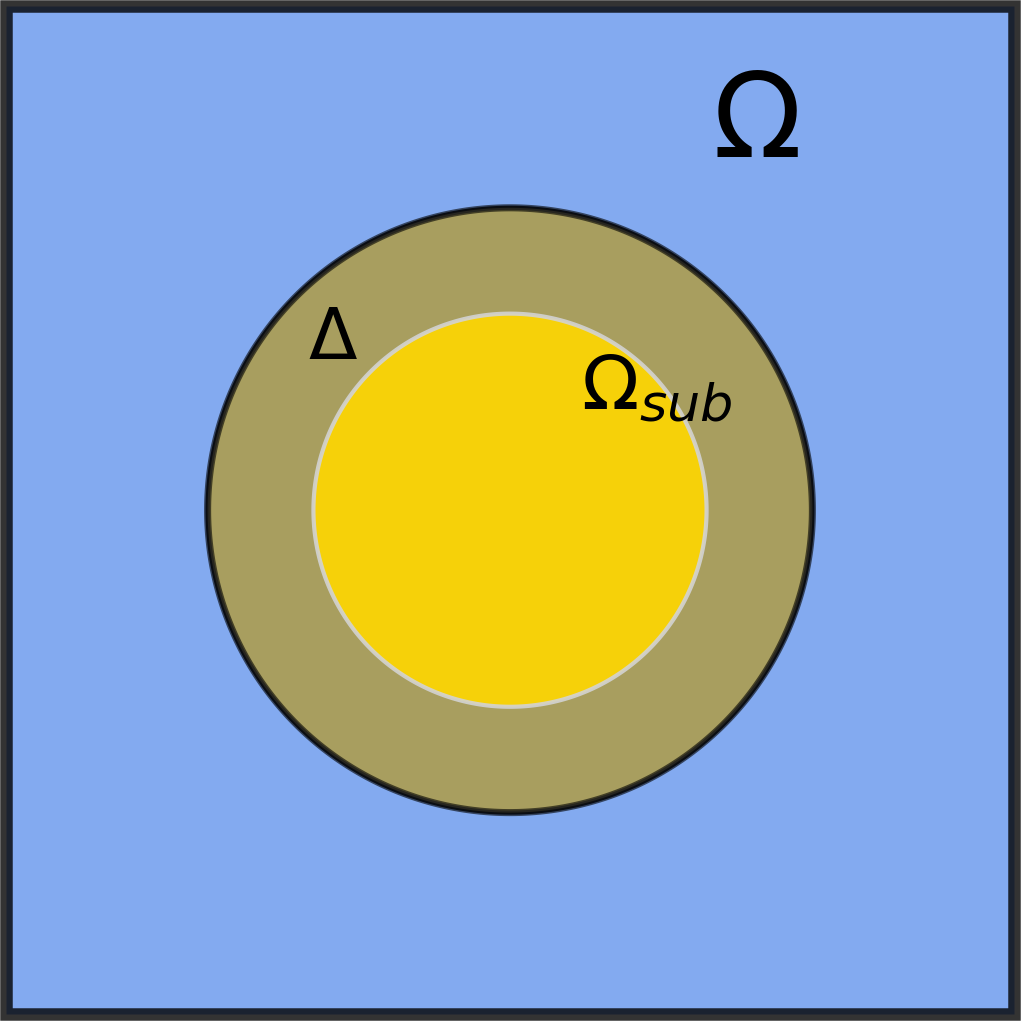}
    \end{tabular}
    \caption{The domain subdivision used for coupling an exact Stokes problem over $\Omega_{\text{sub}}\subset\Omega$ 
    to a perturbed Stokes problem over $\Omega\backslash\Omega_{\text{sub}}$, 
    where $\Delta\subset\Omega_{\text{sub}}$ is a small band where Dirichlet boundary conditions are imposed. Left: $D(\Delta) = 0, |\Delta|=0$. Right: $D(\Delta) > 0, |\Delta|>0$.}
    \label{fig:experiments:domains_delta_coupling_setup}
\end{figure}

\subsubsection{Model coupling: Configuration}

The configuration is as follows. We first solve the Stokes problem \eqref{eq:pstokes} over a square $\Omega = (0,1)^2$ with $\bm f = (0, -1)$ 
 to get the solution pair
$(\bm u,p)$. 
After that, we solve the Stokes problem 
for $(\bm{u_{\text{sub}}}, p_{\text{sub}})$ over the circle 
$\Omega_{\text{sub}} = \{ \bm x \in \Omega\, |\, |\bm x-\bm x_0|<r \},$
 with radius $r:=0.2$ centered in the middle $\bm x_0$ of the square $\Omega$. To obtain 
$(\bm{u_{\text{sub}}}, p_{\text{sub}})$ over $\Omega_{\text{sub}}$ we impose boundary conditions 
$    \bm{u_{\text{sub}}} |_{\partial\Omega_{\text{sub}}} = \bm u|_{\partial\Omega_{\text{sub}}} + \bm{u_\delta}|_{\partial\Omega_{\text{sub}}}$ 
and 
$p_{\text{sub}} |_{\Delta} =  p|_{\Delta} + p_\delta|_{\Delta}$.
Here $\bm{u_\delta}|_{\partial\Omega_{\text{sub}}}$ and $p_\delta|_{\Delta}$ are perturbation functions in velocity (restricted to $\partial\Omega_{\text{sub}}$) and pressure (restricted to $\Delta_{\text{sub}}$) 
respectively. We define perturbation functions over $\Omega$ as:
$(u_\delta)_1 = (u_\delta)_2 = p_\delta = \delta\phi \|\phi\|_{L^\infty(\Omega_{sub})}^{-1}$, where  $\phi(x_1,x_2):=\cos (4 \pi\, x_1\, x_2)$ for
$(x_1,x_2) \in \Omega$,
and $\delta>0$ is a relative measure of the extent to which the problem is perturbed. Case $\delta = 0$ corresponds to no perturbation, 
whereas $\delta = 1$ corresponds to full perturbation. The perturbation function represents the model approximation error in an approximate fluid flow model. 
In addition, we also construct a perturbed solution over the whole $\Omega$ as 
$\bm{\tilde u} = \bm u + \bm{u_\delta}$ and $\tilde p  =  p + p_\delta$.
In the end we merge $(\bm{\tilde u}, \tilde p)$ with $(\bm{u_{\text{sub}}}, p_{\text{sub}})$ as follows:
\begin{equation}
    (\bm u_{\text{merged}}, p_{\text{merged}}) = 
    \begin{cases} 
        (\bm{\tilde u}, \tilde p), & \text{ on } \Omega \backslash \bar\Omega_{\text{sub}} \\
        (\bm{u_{\text{sub}}}, p_{\text{sub}}), & \text{ on } \bar \Omega_{\text{sub}}.
     \end{cases}
     \label{eq:numerical_experiments:stokes_coupled_solution_merged}
    \end{equation}
The merged solution mimics a solution of two coupled problems, where the solution over $\Omega \backslash \Omega_{\text{sub}}$ comes from a perturbed (inexact) Stokes problem, and where 
the solution over $\Omega_{\text{sub}}$ comes from a full (exact) Stokes problem but with perturbed boundary conditions.

\subsubsection{Model coupling: solutions}

\begin{figure}[t]
    \centering
    \begin{tabular}{ccc}
        \hspace{-1cm}Velocity $u_1$ & \hspace{-1.3cm}Velocity $u_2$ & \hspace{-1.3cm}Pressure $p$ \\
        \hspace{-0cm}\includegraphics[width=0.32\linewidth]{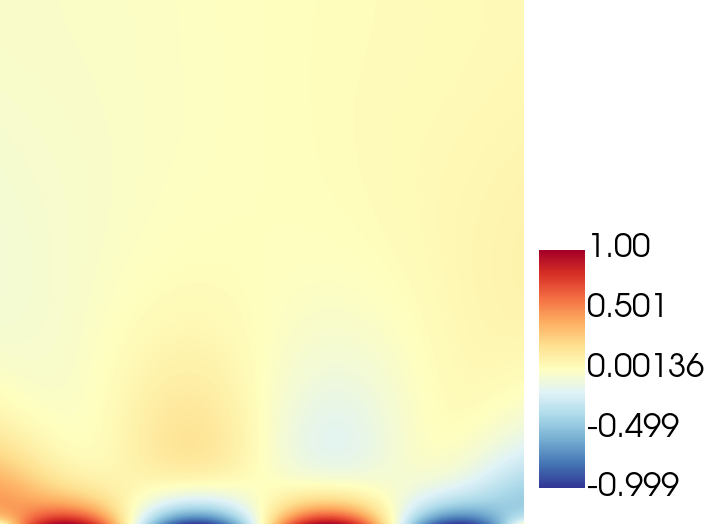} &
        \hspace{-0.35cm}\includegraphics[width=0.315\linewidth]{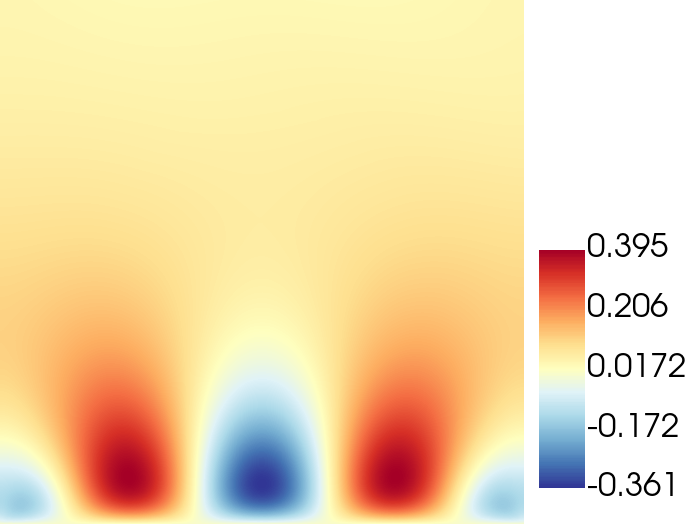} &
        \hspace{-0.35cm}\includegraphics[width=0.305\linewidth]{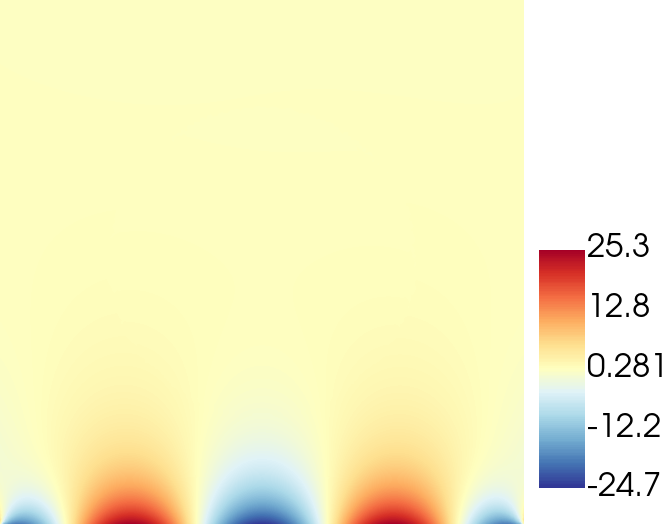} \vspace{0.1cm}
        \\
         \includegraphics[width=0.312\linewidth]{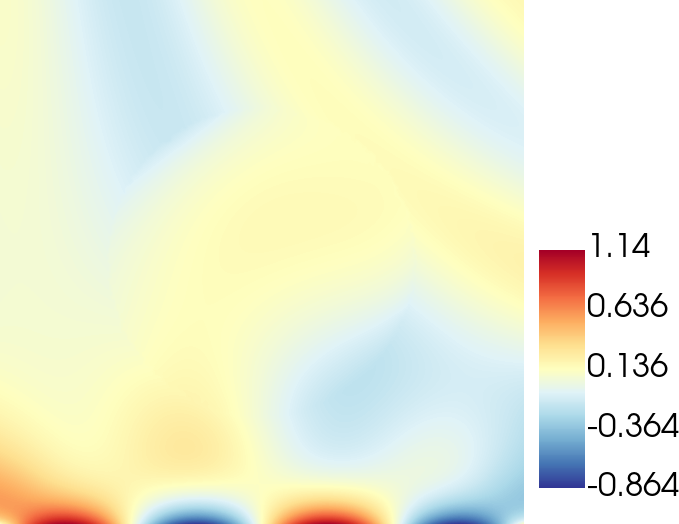} &
        \hspace{-0.3cm}\includegraphics[width=0.32\linewidth]{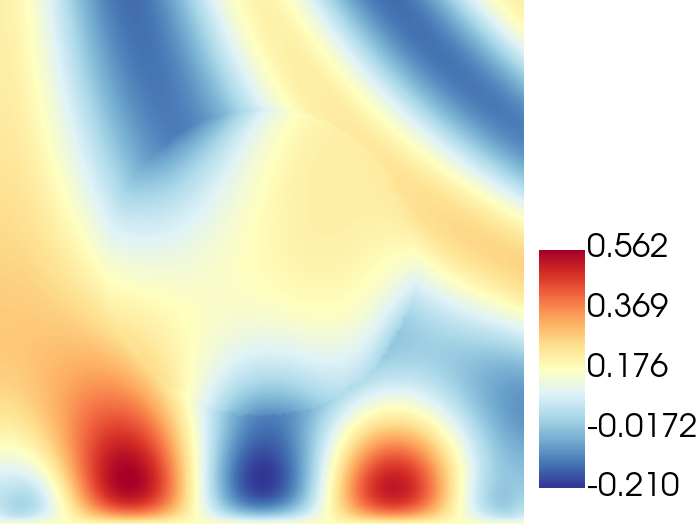} &
        \hspace{-0.3cm}\includegraphics[width=0.308\linewidth]{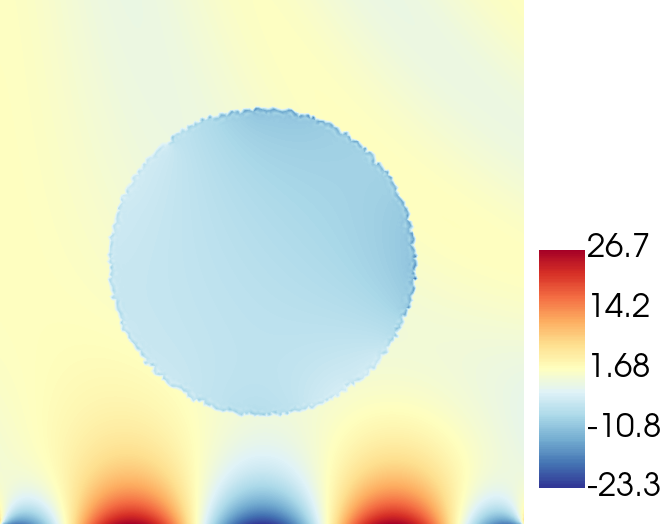} \vspace{0.1cm}\\
        \includegraphics[width=0.312\linewidth]{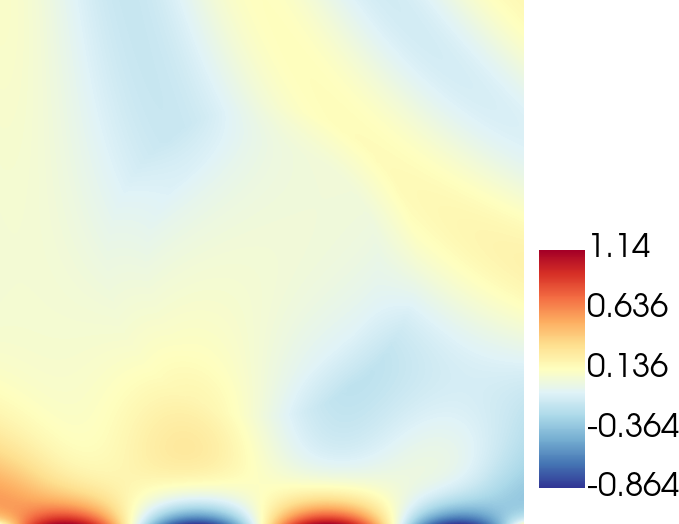} &
        \hspace{-0.3cm}\includegraphics[width=0.312\linewidth]{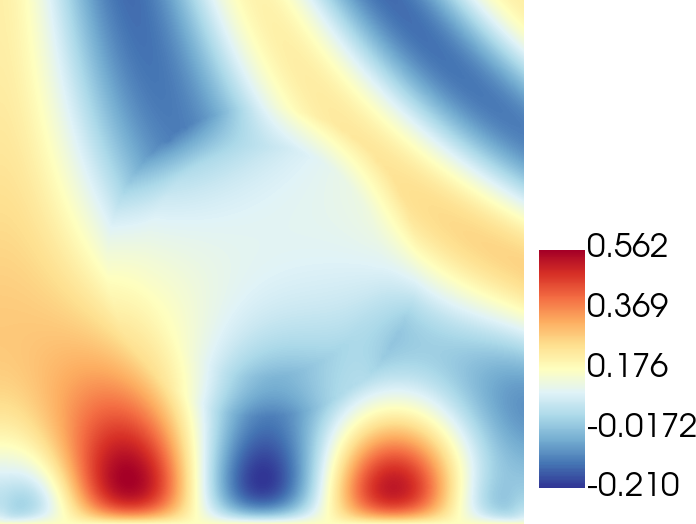} &
        \hspace{-0.3cm}\includegraphics[width=0.312\linewidth]{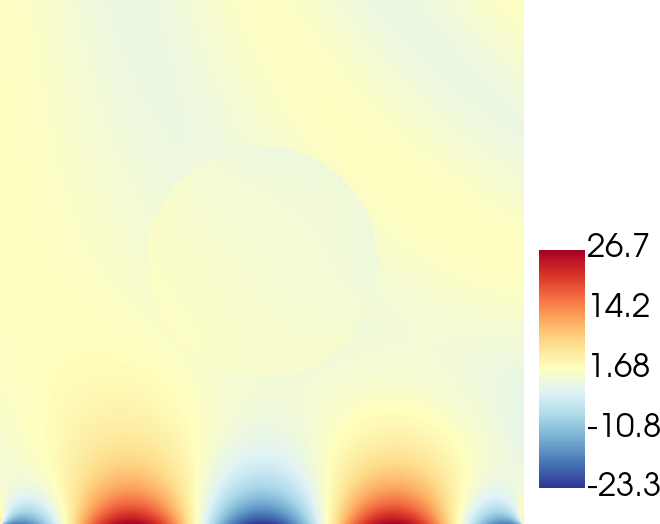} \vspace{0.1cm}
    \end{tabular}
    \caption{Solutions to the perturbed coupled Stokes problem with pressure boundary conditions 
    of the subproblem enforced over $\partial\Omega_{\text{sub}}$.   
    Plots from left to right are: horizontal velocity $u_1$, vertical velocity $u_2$, pressure $p$.
    Upper row: ring diameter $D(\Delta) = 0$, no perturbation $\delta=0.01$.
    Middle row: ring diameter $D(\Delta) = 0$, large perturbation $\delta=1$.
    Lower row: ring diameter $D(\Delta) = 8h$, large perturbation $\delta=1$, and mesh size $h=0.007$.}
    \label{fig:experiments:coupling_subdomains:merged_solution_large perturbation}
    \vspace*{2mm}
    \centering
    \begin{tabular}{ccc}
        \hspace{-0.9cm}Velocity $u_1$ & \hspace{-1.3cm}Velocity $u_2$ & \hspace{-1.2cm}Pressure $p$ \\
        \includegraphics[width=0.305\linewidth]{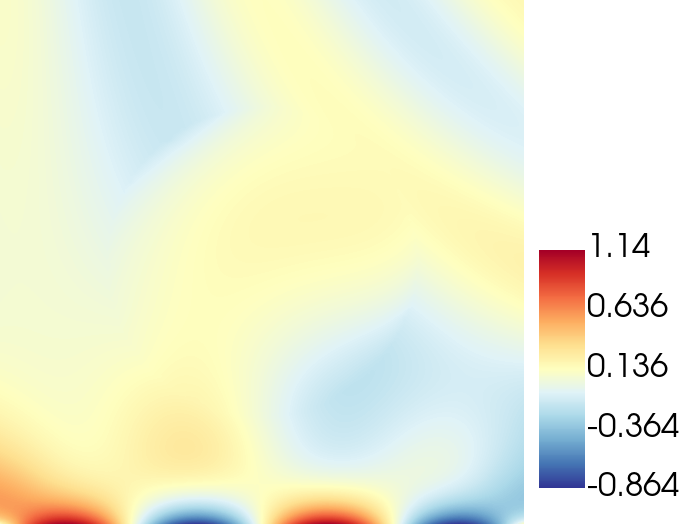} &
        \hspace{-0.3cm}\includegraphics[width=0.312\linewidth]{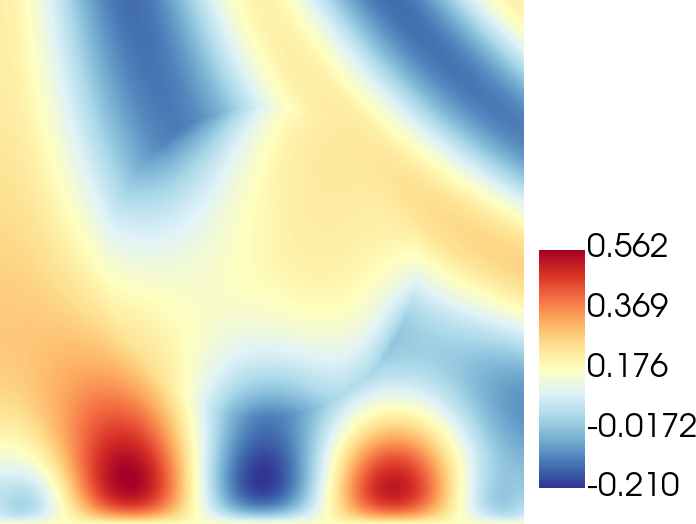} &
        \hspace{-0.3cm}\includegraphics[width=0.295\linewidth]{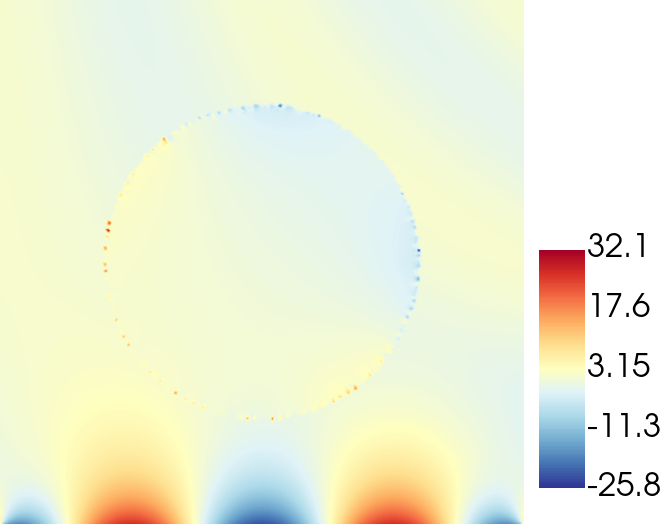} \vspace{0.1cm}\\
    \end{tabular}
    \caption{Solutions to the perturbed coupled Stokes problem with large perturbation $\delta=1$
    and mesh size $h=0.007$. The subproblem is enforced to have zero average $\int_{\Omega_{\text{sub}}} p\, d\Omega = 0$. Plots from left to right are: horizontal velocity $u_1$, vertical velocity $u_2$, pressure $p$.}
    \label{fig:experiments:coupling_subdomains:merged_solution_large perturbation_zero_mean}
\end{figure}

In Figure \ref{fig:experiments:coupling_subdomains:merged_solution_large perturbation} 
we display the coupled solutions $(\bm u_{\text{merged}}, p_{\text{merged}})$ over $\Omega$ as defined in \eqref{eq:numerical_experiments:stokes_coupled_solution_merged}.
The results are shown for subproblem boundary condition perturbation magnitudes 
 $\delta=10^{-2}$ and $\delta=1$. 
When $\delta=10^{-2}$, the coupled solution appears, by visual inspection, identical across all subproblem pressure boundary conditions, whether imposed on
$\partial\Omega_{\text{sub}}$ ($\Delta = \emptyset$) or $\Delta\subset\Omega$ ($\Delta = 8h$). 
However, this consistency breaks down when $\delta=1$, where we observe:
(i) $(\bm u_{\text{merged}}, p_{\text{merged}})$  differ significantly from the original $(\bm u_, p)$ for both choices of $\Delta$, 
(ii) A pronounced boundary layer forms at the interface between $\Omega \backslash \Omega_{\text{sub}}$ and $\Omega_{\text{sub}}$ in all cases.
(iii)  The transition at the interface becomes smoother as  $D(\Delta)$ increases.

In Figure \ref{fig:experiments:coupling_subdomains:merged_solution_large perturbation_zero_mean}, we observe the same behavior when the subproblem solution
is obtained under the constraint  $\int_{\Omega_{\text{sub}}} p\, d\Omega = 0$. 
This suggests that the presence of the boundary layer is caused by the large perturbation in the boundary conditions used to compute $(\bm{u_{\text{sub}}}, p_{\text{sub}})$, rather than by the choice $p \in L^2_\Delta(\Omega_{\text{sub}})$.

\begin{figure}[t]
    \centering
    \begin{tabular}{ccc}
        \hspace{-1.1cm}$D(\Delta) = 0$ & \hspace{-1.2cm}$D(\Delta)=8h$ & \hspace{-1.4cm}$\Delta =\Omega$  \\
        \hspace{-0.1cm}\includegraphics[width=0.313\linewidth]{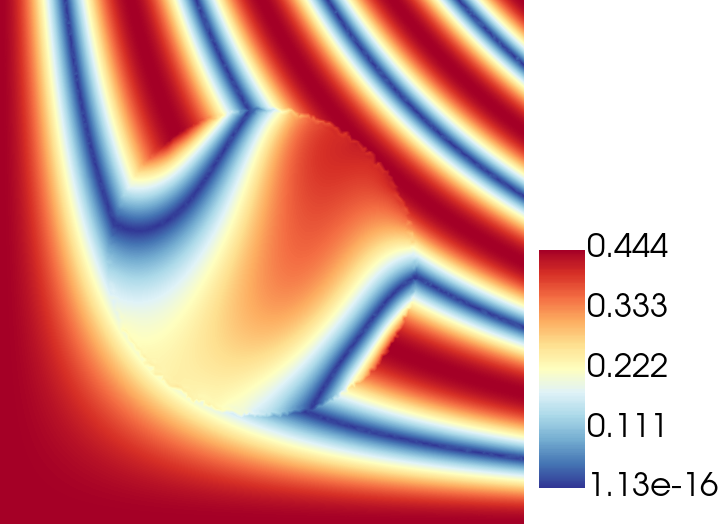} &
        \hspace{-0.2cm}\includegraphics[width=0.313\linewidth]{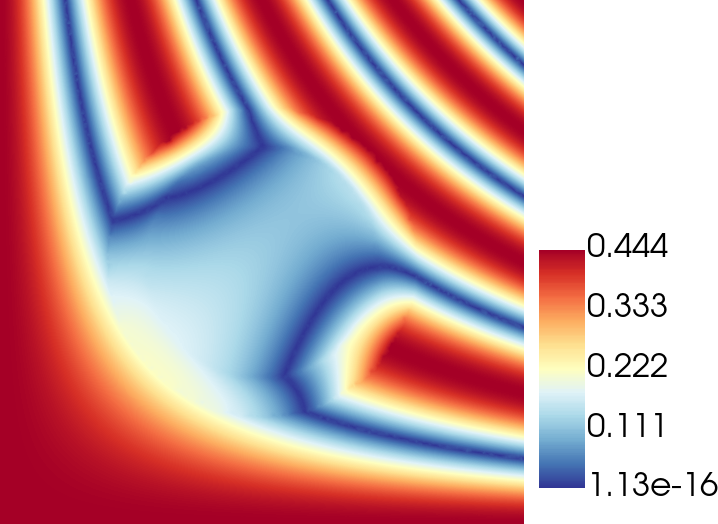} &
        \hspace{-0.2cm}\includegraphics[width=0.313\linewidth]{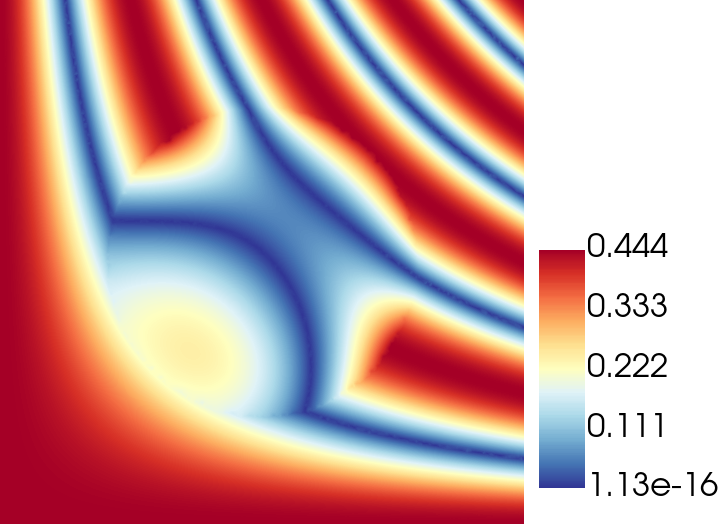} \\
    \end{tabular}
    \caption{
    Spatial 
    error distribution in the vertical velocity solution $u_2$. Perturbed coupled Stokes problem with large perturbation $\delta=1$ and  mesh size $h=0.007$.
    The subproblem pressure boundary conditions are enforced over $\partial\Omega_{\text{sub}}$. From left to right: ring diameter $D(\Delta) = 0$, $D(\Delta)=8h$, $\Delta=\Omega$.}
    \label{fig:experiments:coupling_subdomains:solution_Delta_refinements}
\end{figure}

\subsubsection{Model coupling: spatial relative absolute error distribution}

In Figure \ref{fig:experiments:coupling_subdomains:solution_Delta_refinements}, we present the relative spatial error distribution 
$$
e = |u_{2_{\text{merged}}} - u_2| \|u_2\|_{L^\infty(\Omega)}^{-1},
$$
for the horizontal velocity component $u_{2_{\text{merged}}}$, considering different 
choices of $\Delta$ with a fixed mesh size $h=0.007$. 
We focus exclusively on cases with large boundary condition perturbations.
As $|\Delta|$ increases, we again observe a smoothing effect over 
the boundary layer at the interface between $\Omega \backslash \Omega_{\text{sub}}$ and $\Omega_{\text{sub}}$. 
Additionally, the error within $\Omega_{\text{sub}}$ decreases with increasing $|\Delta|$ (except for the limiting case $\Delta = \Omega$). 
When $D(\Delta)=8h$, the error inside $\Omega_{\text{sub}}$ is significantly smaller than inside $\Omega \backslash \Omega_{\text{sub}}$, 
which is desirable when coupling perturbed models to exact models.
The observed error behaviour is linked to the well-posedness estimate.
According to Theorem \ref{theorem:finalresult_necas_extended}, the well-posedness estimate for pressure depends on $C_\Delta^{-1}$, where $C_\Delta\sim|\Delta|^{1/2}$.
Consequently, increasing $|\Delta|$ decreases $C_\Delta^{-1}$, providing an explanation for the observed error reduction in 
Figure \ref{fig:experiments:coupling_subdomains:solution_Delta_refinements}.

\subsubsection{Model coupling: solution norms under mesh refinement}

In Figure 
\ref{fig:experiments:coupling_subdomains:solution_norms_mesh_refinement_large_perturbation}, we present the norms of the solution under mesh refinement for cases with a large subproblem boundary condition perturbation $\delta = 2$. The left and middle plots display the velocity and pressure norms, respectively, when 
$D(\Delta)$ varies with the mesh size.
 The right plot shows the pressure norm when $D(\Delta)$ is fixed. 
We observe that in the first two plots, the error norms remain nearly constant when the pressure is computed under a zero-mean constraint. This is a desirable outcome, indicating numerical stability across refinements.

\begin{figure}[t]
    \centering
    \begin{tabular}{ccc}
        \hspace{0.6cm}Velocity & \hspace{0.9cm}Pressure & \hspace{0.9cm}Pressure \vspace{-0.05cm}\\
        \includegraphics[width=0.2775\linewidth]{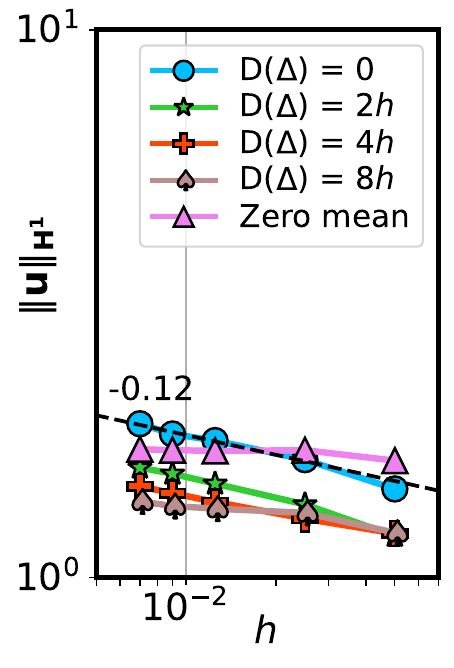} &
        \includegraphics[width=0.3\linewidth]{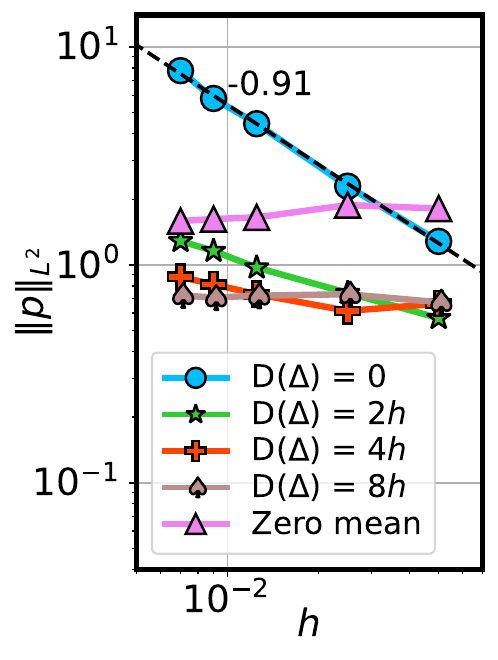}&
        \includegraphics[width=0.3\linewidth]{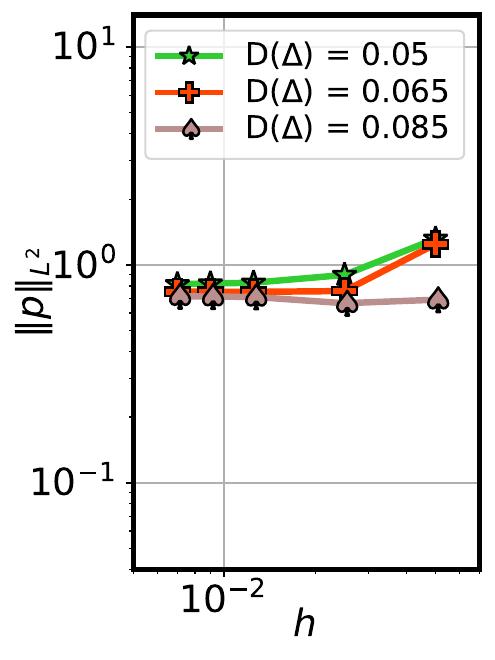}
    \end{tabular}
    \caption{Solution norm convergence under mesh size ($h$) refinement when the perturbation in the Stokes subproblem 
    boundary conditions is large ($\delta = 2$). In the first two plots from left to right we measure 
    the velocity $\bm H^1(\Omega_{\text{sub}})$ norm, 
    and the pressure $L^2(\Omega_{\text{sub}})$ norm when the pressure boundary condition is imposed over different choices of $\Delta \subset \Omega_{\text{sub}}$, where diameter $D(\Delta)$ depends on the mesh size $h$. 
    In the third plot from left to right, we measure the pressure $L^2(\Omega_{\text{sub}})$ norm when $D(\Delta)$ is mesh independent.}
    \label{fig:experiments:coupling_subdomains:solution_norms_mesh_refinement_large_perturbation}
\end{figure}

When pressure is prescribed over $\partial\Omega$, i.e. $D(\Delta) = 0$, the velocity and pressure norms grow at rates of approximately $0.1$ and $1$, respectively, as $h \to 0$. This behaviour is undesirable. However, when $D(\Delta)=kh>0$  is defined as a function of $h$, we observe a stabilizing effect: both velocity and pressure norms flatten as the coefficient $k$ grows. 
Despite this improvement, achieving no growth of the norm for each choice of $D(\Delta) > 0$ remains a key objective. 
To ensure well-behaved pressure norms as $h \to 0$, we set $D(\Delta)>0$ independent of $h$. 
The third (right) plot of Figure \ref{fig:experiments:coupling_subdomains:solution_norms_mesh_refinement_large_perturbation}. 
confirms this approach yields the desired stability.

In Figure \ref{fig:experiments:coupling_subdomains:pressure_convergence_plots_small_perturb}, we repeat the experiment, this time with a small perturbation in the subproblem boundary conditions ($\delta=10^{-2}$). 
In this case, all error norms remain bounded regardless of the choice of $D(\Delta)$, 
 including when the pressure boundary condition is imposed solely on $\partial\Omega$ ($D(\Delta)=0$ case).

\begin{figure}[t!]
    \centering
    \begin{tabular}{ccc}
        \hspace{0.6cm}Velocity & \hspace{0.9cm}Pressure & \hspace{0.9cm}Pressure \vspace{-0.05cm}\\
        \includegraphics[width=0.3\linewidth]{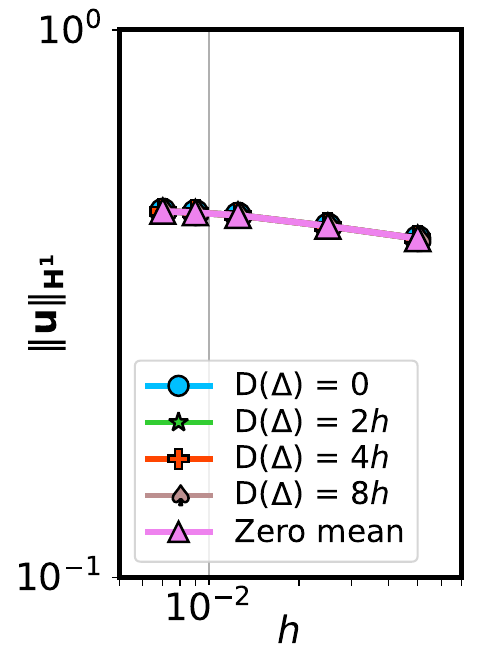} &
        \includegraphics[width=0.3\linewidth]{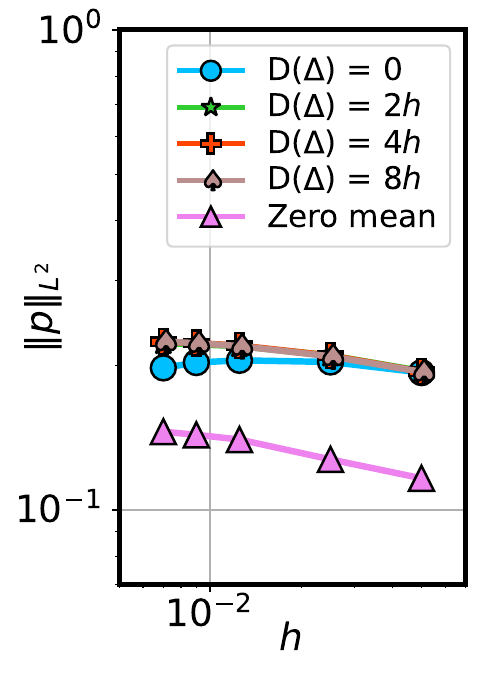}&
        \includegraphics[width=0.3\linewidth]{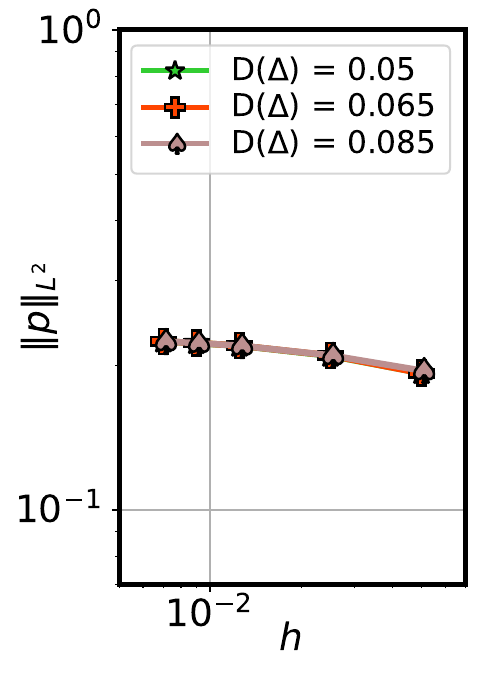}
    \end{tabular}
    \caption{Solution norm convergence under mesh size ($h$) refinement when the perturbation in the Stokes subproblem 
    boundary conditions is small ($\delta = 10^{-2}$). In the left plot, we show
    the velocity $\bm H^1(\Omega_{\text{sub}})$ norm, in the middle plot we display the pressure $L^2(\Omega_{\text{sub}})$ norm when the pressure boundary condition is imposed over different choices of $\Delta \subset \Omega_{\text{sub}}$, with the diameter $D(\Delta)$ depending on the mesh size $h$. 
    In the right plot,  we show the pressure $L^2(\Omega_{\text{sub}})$ norm when $D(\Delta)$ is independent of $h$.}
    \label{fig:experiments:coupling_subdomains:pressure_convergence_plots_small_perturb}
\end{figure}

To avoid instabilities when boundary condition perturbations are large, we recommend setting
$D(\Delta) \propto |\Delta|$, independent of the mesh size, when using the boundary conditions \eqref{eq:bc_u_p_homogenous_reformulated} to solve the Stokes problem.

\subsubsection{Model coupling: solution norms under $|\Delta|$-refinement}

In Figure \ref{fig:experiments:coupling_subdomains:pressure_convergence_plots_large_perturb_Delta_refinement} we 
present the velocity and pressure norms as $|\Delta| \to 0$, while fixing the mesh size to $h=0.007$, for different subproblem boundary condition perturbation $\delta$. 
When  $\delta$ is small, we observe that the velocity and pressure norms remain constant as $|\Delta| \to 0$. 
However, for large perturbations $\delta \geq 1$, the velocity and pressure norms begin to grow with approximate orders of $0.1$ and $0.5$, respectively.  For the largest perturbation considered  ($\delta=16$), the norm growth is bounded, as indicated by the dashed brown line. 
This behaviour can be explained by the well-posedness constant $C_\Delta$ in Lemma \ref{lemma:T_range_closed}. Theoretically, we observe that $(C_\Delta)^{-1}$ grows with order $0.5$, provided that $p \in L^2_\Delta(\Omega_{\text{sub}})$ 
approaches a constant function. As further support, Figure 
\ref{fig:p-delta} shows that the pressure function indeed becomes nearly constant as $|\Delta| \to 0$.

\begin{figure}[t]
    \centering
    \begin{tabular}{ccc}
        \vspace{-0.05cm}\hspace{0.7cm}Velocity & \hspace{0.7cm}Pressure \\
        \includegraphics[width=0.28\linewidth]{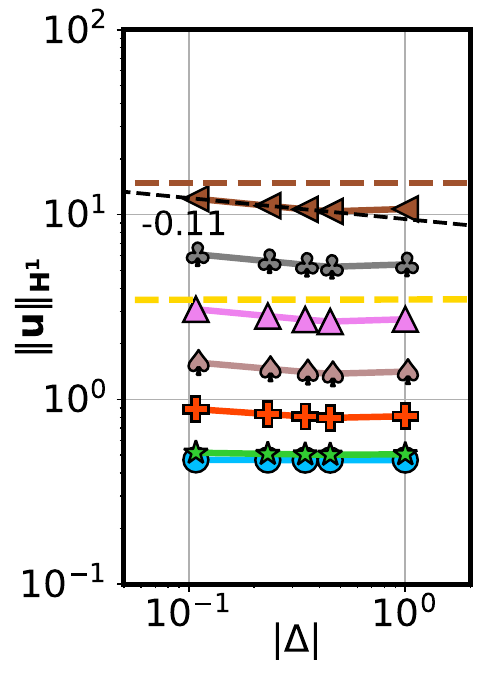}&
        \includegraphics[width=0.28\linewidth]{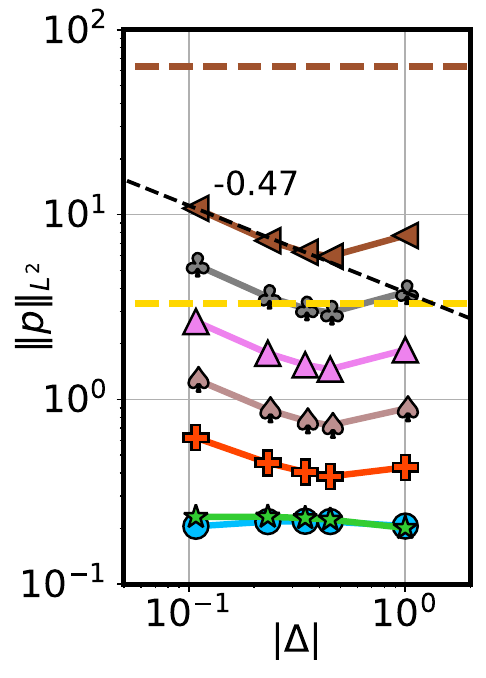} &
        \raisebox{1cm}{\includegraphics[width=0.19\linewidth]{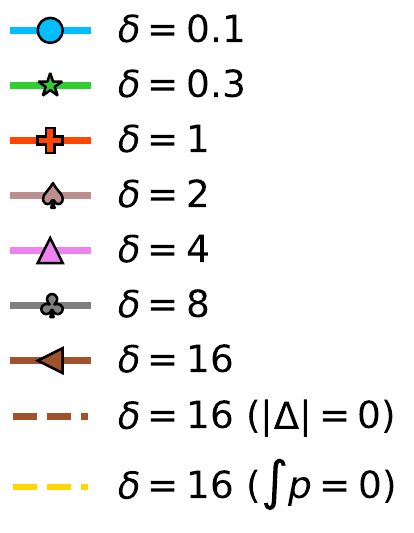}}
    \end{tabular}
    \caption{Solution norm convergence under $|\Delta|$-refinement for different choices of the perturbation magnitudes $\delta$ in the Stokes 
    subproblem boundary conditions, $h=0.007$. 
    Here $|\Delta|$ is the volume of the region $\Delta \subset \Omega_{\text{sub}}$ where the subproblem pressure Dirichlet 
    boundary condition is imposed. The dashed lines represent the limiting cases 
    when $|\Delta|=0$ and $\int_{\Omega_{\text{sub}}} p\, d\Omega = 0$ 
    for the largest perturbation magnitude $\delta = 16$.}
    \label{fig:experiments:coupling_subdomains:pressure_convergence_plots_large_perturb_Delta_refinement}
    \vspace*{4mm}
    \centering
    \begin{tabular}{cccc}
        \hspace{-0.7cm}$|\Delta|=0.44$ & \hspace{-0.7cm}$|\Delta|=0.23$  & \hspace{-0.7cm}$|\Delta|=0.11$ & \hspace{-0.7cm}$|\Delta|=0$  \\
        \includegraphics[width=0.22\linewidth]{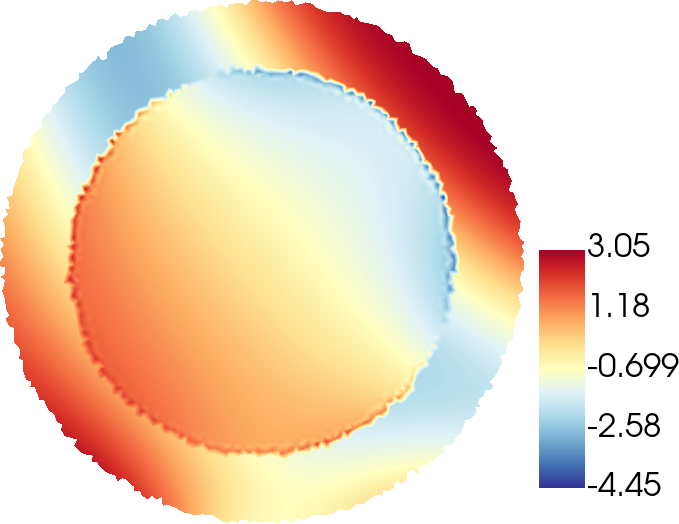} &
        \includegraphics[width=0.22\linewidth]{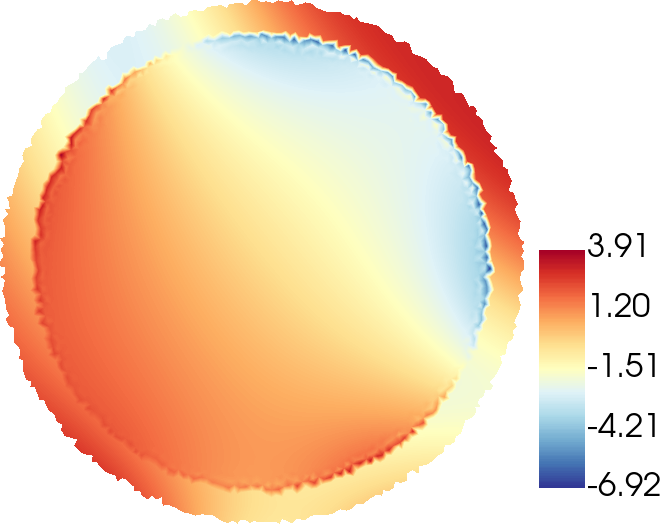} &
        \includegraphics[width=0.22\linewidth]{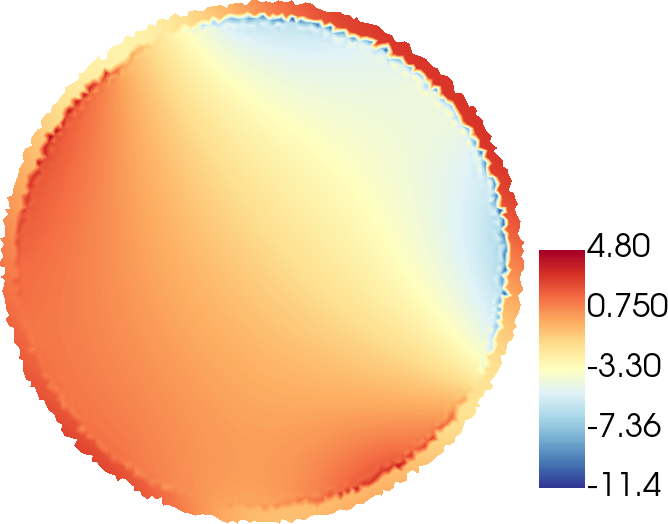} &
        \includegraphics[width=0.22\linewidth]{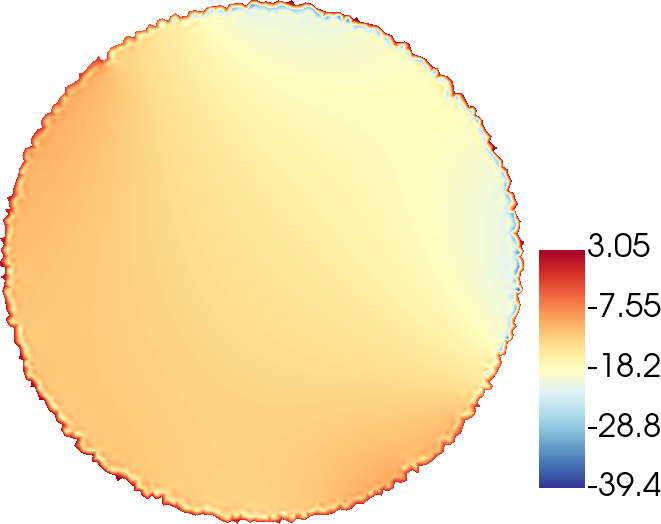}
    \end{tabular}  
    \caption{Pressure solutions of the coupled Stokes problem drawn over $\Omega_{\text{sub}}$ 
    under varying volume $|\Delta|$ of the boundary region $\Delta \subset \Omega_{\text{sub}}$ 
    where the subproblem pressure Dirichlet boundary condition. The subproblem boundary condition perturbation is large ($\delta = 2$).\label{fig:p-delta}}
\end{figure}

\subsection{Velocity divergence norms under $|\Delta|$-refinement}

We are testing how well the discrete incompressibility condition $\nabla \cdot \bm u_h = 0$ is satisfied when solving the Stokes problem with Dirichlet boundary conditions for the velocity, combined with one of the following pressure boundary conditions: (i) the pressure boundary condition from \eqref{eq:bc_u_p_homogenous_reformulated}, or (ii) the zero average pressure condition $\int_\Omega p_h\, d\Omega = 0$. 

Our goal is to understand how the size of  $\Delta$ affects two measures of incompressibility: the velocity divergence $\|\nabla \cdot \bm u_h\|_{L^1(\Omega)}$, and the absolute average of the velocity divergence, $|\int_\Omega \nabla \cdot \bm u_h\, d\Omega|$. Both measures are preferred to be as close to zero as possible. Additionally, we aim to investigate how these two measures differ across the two cases (i) and (ii).
 
\begin{figure}[t]
    \centering
\begin{tabular}{ccc}
    \hspace{1cm}$L^1$-norm & \hspace{0.7cm}Abs. average & \\
    \includegraphics[width=0.3\linewidth]{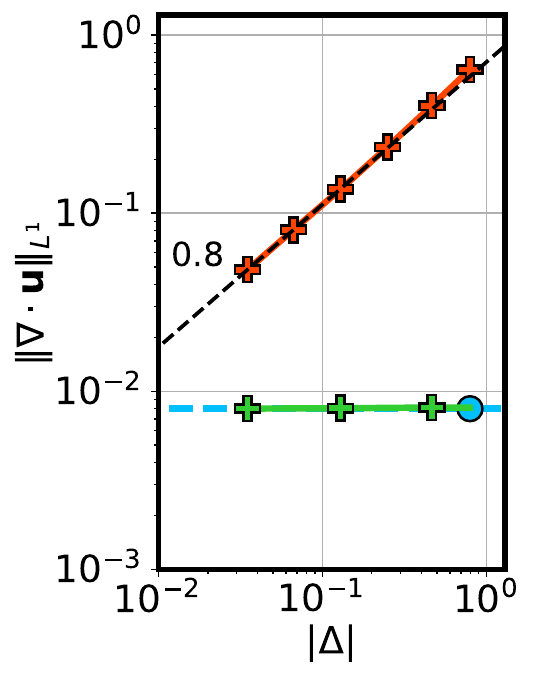} &
    \includegraphics[width=0.287\linewidth]{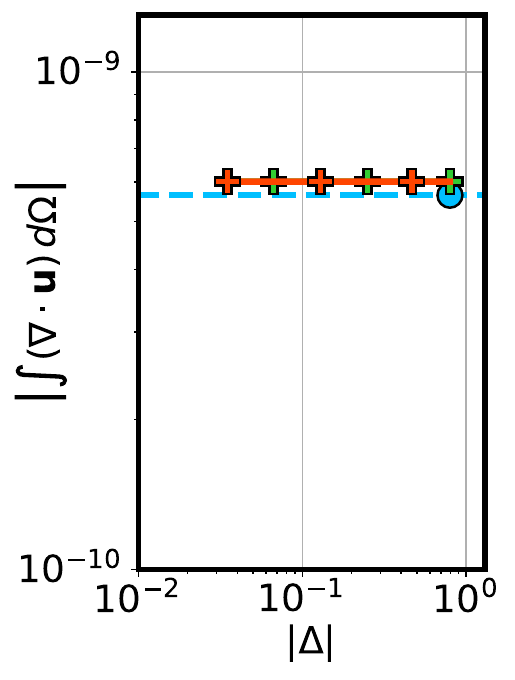} &
    \raisebox{2cm}{\includegraphics[width=0.22\linewidth]{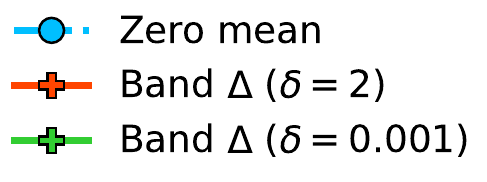}}
\end{tabular}
\caption{Velocity divergence norms under $|\Delta|$-refinement, when Dirichlet boundary conditions are used for velocity and further supplemented by 
the following constraints in pressure: (i) $\int_\Omega p\, d\Omega = 0$ (label Zero mean), 
(ii) $p|_\Delta = p_*|_\Delta + p_\delta$ (label Band $\Delta$). Here $p_*|_\Delta$ is the pressure solution to (i) evaluated at $\Delta \subset \Omega$. 
The function $p_\delta$ is a perturbation with magnitudes $\delta=0.001$ (small perturbation), $\delta=2$ (large perturbation).}
\label{fig:experiments:div_velocity}
\end{figure}

The computed solution pair $(\bm u_h, p_h)$ generally differs between conditions (i) and (ii) since these conditions define two distinct Stokes models. Consequently, the magnitudes of the discrete incompressibility condition $\nabla \cdot \bm u_h$ 
also vary across the two cases. To enable a meaningful comparison, we establish a connection between conditions (i) and (ii) as follows.
First, we solve the Stokes problem using the constraint $\int_\Omega p_h^*\, d\Omega = 0$ (condition (ii)), obtaining the solution pair $(\bm u_h^*, p_h^*)$. 
We then reuse $p_h^*$ to define a new problem under condition (i), modifying the boundary conditions to $p_h|_\Delta = p_h^*|_\Delta + p_h^\delta|_\Delta$, where $p_h^\delta|_\Delta$ is a pressure perturbation function given by $p_h^\delta = \delta \cos (2\pi x_1\, x_2)$. We consider both a small perturbation ($\delta=10^{-3}$) and a large perturbation ($\delta=2$), with $\Delta \subset \Omega$ defined as a band surrounding $\partial\Omega$. 
For our setup, we take $\Omega=(0,1)^2$, $\bm f = (0, -1)$, and impose the boundary condition
$\bm u_h|_{\partial\Omega} = (x \sin (2\pi\, x_1\, x_2), \sin (2\pi\, x_1\, x_2))$, $(x_1, x_2) \in \partial\Omega$. 
The results are presented in Figure \ref{fig:experiments:div_velocity}. 

When the perturbation in the boundary condition is large, we observe that as $|\Delta|\to 0$, the norm $\|\nabla \cdot \bm u_h\|_{L^1(\Omega)}$ converges approximately as $|\Delta|^{0.8}$ towards the zero average pressure case. This rate of convergence exceeds the expectations set by our a priori estimate in Lemma \ref{lemma:divfree:divu_L2_DeltaOnly}. We attribute this behaviour to the regularity of the chosen forcing term $\bm f=(0,-1)$, which is infinitely smooth, whereas our estimate is designed to accommodate forcing terms in the space $\bm H^{-1}(\Omega)$.

When the perturbation in the pressure boundary data is small, we observe that $\|\nabla \cdot \bm u_h\|_{L^1(\Omega)}$ 
remains small and identical to the zero-average pressure case for all values of $|\Delta|$. Notably,
$\|\nabla \cdot \bm u_h\|_{L^1(\Omega)}$ does not decay at the expected order of 0.5 when $|\Delta| \to 0$, as stated in Lemma \ref{lemma:divfree:divu_L2_DeltaOnly}.
This discrepancy arises due to numerical approximation errors in $\nabla \cdot \bm u_h$ when computed under the zero average pressure constraint. 

The term $|\int_\Omega \nabla \cdot \bm u_h\, d\Omega|$ remains at the level of round-off errors for all considered pressure constraint types, regardless of the perturbation size or the choice of $|\Delta|$. This result aligns with the discussion in Section \ref{section:Stokes_wellposedness_pressure_L_2_00}.

\section{Guidelines on imposing Dirichlet-type pressure boundary conditions}\label{sec:guidelines}

We now use the Stokes well-posedness estimate from Theorem \ref{theorem:stokes_wellposedness_final} along with the numerical results from Section \ref{section:numerical_experiments} to provide guidelines for imposing Dirichlet-type pressure boundary conditions in different scenarios. When combined with Dirichlet velocity boundary conditions, we find that:
\begin{itemize}
\item $p_h|_\Delta = 0$ \textbf{is recommended} when $\Delta$ does not scale with mesh size,
\item $p_h|_{\partial\Omega}=0$ \textbf{is not recommended},
\item $p_h(\bm x)=0$ for a single point $\bm x\in\partial\Omega$ or $\bm x\in\Omega$ \textbf{is not recommended}.
\end{itemize}

The first case represents the standard setting when (i) $\Delta \subset \Omega$ and $|\Delta| > 0$. According to Theorem \ref{theorem:stokes_wellposedness_final} we have $C_\Delta^{-1} < \infty$ and well-posedness.

The second and third cases arise in special situations: 
(ii) when $\Delta$ is a ring around $\partial\Omega$ and $|\Delta| \to 0$, or (iii) when $\Delta$ is a ball centered at $\bm x\in\overline{\Omega}$ and $|\Delta| \to 0$. 
In these limiting cases, Theorem \ref{theorem:stokes_wellposedness_final} implies that $C_\Delta^{-1} \to \infty$, indicating a loss of well-posedness. 
This aligns with our numerical experiments, where we observed that $\|p_h\|_{L^2(\Omega)} \to \infty$ as $h \to 0$ when the boundary condition data included a large perturbation. However, for small perturbations, we did not observe unbounded growth of $\|p_h\|_{L^2(\Omega)}$ as $h\to 0$. 
Thus, while there are cases where the pressure boundary condition $p_h|_{\partial\Omega}=0$ does not lead to spurious pressure growth, the condition $p_h|_\Delta = 0$ has the distinct advantage of preventing such growth entirely, regardless of the perturbation size. In PDE model coupling methods \cite{ISSM_iscal,iscal} and domain decomposition methods \cite{domain_decomposition_toselli2006}, boundary condition data inherently contain model errors. For robustness, we therefore recommend using $p_h|_\Delta = 0$.

Another important consideration is the choice of $\Delta$. When $\Delta$ is mesh dependent, the size of $\Delta$ scales as $|\Delta|\propto h^d$ in $d$ dimensions. 
This leads to $C_\Delta \propto |\Delta| \propto h^d$, which causes 
$C_\Delta^{-1} \to \infty$ as $h \to 0$. To avoid this issue, we recommend always choosing $\Delta$ independent of the mesh size, with its size proportional to the size of $\Omega$. This ensures that  $C_\Delta^{-1}$ remains finite in Theorem \ref{theorem:stokes_wellposedness_final} thereby maintaining well-posedness. 

\section{Final remarks}\label{sec:finalremarks}

In this paper, we investigated the well-posedness of the Stokes problem \eqref{eq:pstokes} under Dirichlet boundary conditions, as stated in \eqref{eq:bc_u_p_homogenous_reformulated}. Specifically, we considered the case where the velocity vanishes on $\partial\Omega$ and the pressure vanishes over a region $\Delta \subset \Omega$ with $|\Delta|>0$. The associated infinite-dimensional function spaces for velocity and pressure were $\bm V = \bm H^1_0(\Omega)$ and $Q = L^2_\Delta(\Omega)$, respectively.

We found that the Stokes problem is well-posed, with the a priori pressure estimate depending on $\Delta$, as stated in Theorem \ref{theorem:stokes_wellposedness_final}. This result arises from our extension of the Nečas inequality to $L^2_\Delta(\Omega)$ spaces, as detailed in Theorem \ref{theorem:finalresult_necas_extended}. Specifically, we found that the constant $C_\Delta>0$ in Theorem \ref{theorem:finalresult_necas_extended} scales with $|\Delta|^{1/2}$
when the pressure function approaches a constant. This represents the limiting case where pressure uniqueness is violated. Our numerical experiments confirmed these theoretical observations.
Building on these results, we provided guidelines for imposing pressure Dirichlet boundary conditions when solving the Stokes problem (Section \ref{sec:guidelines}). These guidelines can be directly applied in PDE model coupling methods, such as those in \cite{ISSM_iscal,iscal}.

Future work includes applying the derived well-posedness estimates to the convergence analysis of overlapping domain decomposition methods for Stokes and Navier-Stokes problems. Additionally, the extended Nečas inequality could be used for well-posedness analysis of numerical discretization methods employing discontinuous trial spaces.

\section*{Acknowledgments}

We thank Murtazo Nazarov from Uppsala University and Salvador Rodriguez-Lopéz from Stockholm University 
for valuable discussions. 
Josefin Ahlkrona and Igor To\-mi\-nec were funded by the Swedish Research Council, grant number 2021-04001. 

\section*{Declarations}

Conflict of interest: not applicable.

\bibliographystyle{spmpsci}
\bibliography{refs}

\begin{thebibliography}{10}
\providecommand{\url}[1]{{#1}}
\providecommand{\urlprefix}{URL }
\expandafter\ifx\csname urlstyle\endcsname\relax
  \providecommand{\doi}[1]{DOI~\discretionary{}{}{}#1}\else
  \providecommand{\doi}{DOI~\discretionary{}{}{}\begingroup
  \urlstyle{rm}\Url}\fi

\bibitem{ISCALgrounding}
Ahlkrona, J.: The {I}{S}{C}{A}{L} method and the grounding line: {C}ombining
  the {S}tokes equations with the {S}hallow {I}ce {A}pproximation and {S}helfy
  {S}tream {A}pproximation.
\newblock Tech. rep., Uppsala University (2016)

\bibitem{iscal}
Ahlkrona, J., Lötstedt, P., Kirchner, N., Zwinger, T.: Dynamically coupling
  the non-linear stokes equations with the shallow ice approximation in
  glaciology: Description and first applications of the iscal method.
\newblock Journal of Computational Physics \textbf{308}, 1--19 (2016).
\newblock \doi{https://doi.org/10.1016/j.jcp.2015.12.025}.
\newblock
  \urlprefix\url{https://www.sciencedirect.com/science/article/pii/S002199911500844X}

\bibitem{Alnaes2015}
Aln{\ae}s, M., Blechta, J., Hake, J., Johansson, A., Kehlet, B., Logg, A.,
  Richardson, C., Ring, J., Rognes, M., Wells, G.: The {FEniCS} project version
  1.5.
\newblock Archive of Numerical Software \textbf{3}(100) (2015).
\newblock \doi{10.11588/ans.2015.100.20553}.
\newblock
  \urlprefix\url{http://journals.ub.uni-heidelberg.de/index.php/ans/article/view/20553}

\bibitem{Alnaes2014}
Aln{\ae}s, M.S., Logg, A., {\O}lgaard, K.B., Rognes, M.E., Wells, G.N.: Unified
  form language: A domain-specific language for weak formulations of partial
  differential equations.
\newblock ACM Trans. Math. Softw. \textbf{40}(2), 9:1--9:37 (2014).
\newblock \doi{10.1145/2566630}.
\newblock \urlprefix\url{http://doi.acm.org/10.1145/2566630}

\bibitem{UFL}
Alnæs, M.S., Logg, A., {\O}lgaard, K.B., Rognes, M.E., Wells, G.N.: Unified
  {F}orm {L}anguage: {A} {D}omain-specific {L}anguage for {W}eak {F}ormulations
  of {P}artial {D}ifferential {E}quations.
\newblock ACM Trans. Math. Softw. \textbf{40}(2), 9:1--9:37 (2014)

\bibitem{seloula:hal-00686230}
Amrouche, C., Seloula, N.E.H.: {$L^p$}-theory for the {N}avier-{S}tokes
  equations with pressure boundary conditions.
\newblock Discrete Contin. Dyn. Syst. Ser. S \textbf{6}(5), 1113--1137 (2013).
\newblock \doi{10.3934/dcdss.2013.6.1113}.
\newblock \urlprefix\url{https://doi.org/10.3934/dcdss.2013.6.1113}

\bibitem{becker_braack_multigrid}
Becker, R., Braack, M.: Multigrid techniques for finite elements on locally
  refined meshes.
\newblock pp. 363--379 (2000).
\newblock \doi{10.1002/1099-1506(200009)7:6<363::aid-nla202>3.0.co;2-v}.
\newblock
  \urlprefix\url{https://doi.org/10.1002/1099-1506(200009)7:6<363::aid-nla202>3.0.co;2-v}.
\newblock Numerical linear algebra methods for computational fluid flow
  problems

\bibitem{bernardi:hal-00961653}
Bernardi, C., Chacon-Rebollo, T., Yakoubi, D.: {Finite element discretization
  of the Stokes and Navier-Stokes equations with boundary conditions on the
  pressure}.
\newblock Tech. rep., Universit\'e Sorbonne (2014).
\newblock \urlprefix\url{https://hal.sorbonne-universite.fr/hal-00961653}

\bibitem{BERTOLUZZA201758}
Bertoluzza, S., Chabannes, V., Prud’homme, C., Szopos, M.: Boundary
  conditions involving pressure for the stokes problem and applications in
  computational hemodynamics.
\newblock Computer Methods in Applied Mechanics and Engineering \textbf{322},
  58--80 (2017).
\newblock \doi{https://doi.org/10.1016/j.cma.2017.04.024}.
\newblock
  \urlprefix\url{https://www.sciencedirect.com/science/article/pii/S0045782516318655}

\bibitem{malte_model_refinement}
Braack, M., Ern, A.: A posteriori control of modeling errors and discretization
  errors.
\newblock Multiscale Model. Simul. \textbf{1}(2), 221--238 (2003).
\newblock \doi{10.1137/S1540345902410482}.
\newblock \urlprefix\url{https://doi.org/10.1137/S1540345902410482}

\bibitem{brenner_scott_book}
Brenner, S.C., Scott, L.R.: The mathematical theory of finite element methods,
  \emph{Texts in Applied Mathematics}, vol.~15, third edn.
\newblock Springer, New York (2008).
\newblock \doi{10.1007/978-0-387-75934-0}.
\newblock \urlprefix\url{https://doi.org/10.1007/978-0-387-75934-0}

\bibitem{Conca}
Conca, C., Murat, F., Pironneau, O.: The {S}tokes and {N}avier-{S}tokes
  equations with boundary conditions involving the pressure.
\newblock Japan. J. Math. (N.S.) \textbf{20}(2), 279--318 (1994).
\newblock \doi{10.4099/math1924.20.279}.
\newblock \urlprefix\url{https://doi.org/10.4099/math1924.20.279}

\bibitem{ErnGuermond2004}
Ern, A., Guermond, J.L.: Theory and practice of finite elements, \emph{Applied
  Mathematical Sciences}, vol. 159.
\newblock Springer-Verlag, New York (2004).
\newblock \doi{10.1007/978-1-4757-4355-5}.
\newblock \urlprefix\url{https://doi.org/10.1007/978-1-4757-4355-5}

\bibitem{guermond_voli}
Ern, A., Guermond, J.L.: Finite elements {I}---{A}pproximation and
  interpolation, \emph{Texts in Applied Mathematics}, vol.~72.
\newblock Springer, Cham ([2021] \copyright2021).
\newblock \doi{10.1007/978-3-030-56341-7}.
\newblock \urlprefix\url{https://doi.org/10.1007/978-3-030-56341-7}

\bibitem{guermond_volii}
Ern, A., Guermond, J.L.: Finite elements {II}---{G}alerkin approximation,
  elliptic and mixed {PDE}s, \emph{Texts in Applied Mathematics}, vol.~73.
\newblock Springer, Cham ([2021] \copyright2021).
\newblock \doi{10.1007/978-3-030-56923-5}.
\newblock \urlprefix\url{https://doi.org/10.1007/978-3-030-56923-5}

\bibitem{Evans_book}
Evans, L.C.: Partial differential equations, \emph{Graduate Studies in
  Mathematics}, vol.~19.
\newblock American Mathematical Society, Providence, RI (1998).
\newblock \doi{10.1090/gsm/019}.
\newblock \urlprefix\url{https://doi.org/10.1090/gsm/019}

\bibitem{Girault_and_Raviart}
Girault, V., Raviart, P.A.: Finite element methods for {N}avier-{S}tokes
  equations, \emph{Springer Series in Computational Mathematics}, vol.~5.
\newblock Springer-Verlag, Berlin (1986).
\newblock \doi{10.1007/978-3-642-61623-5}.
\newblock \urlprefix\url{https://doi.org/10.1007/978-3-642-61623-5}.
\newblock Theory and algorithms

\bibitem{GreshoSani1987}
Gresho, P.M., Sani, R.L.: On pressure boundary conditions for the
  incompressible navier-stokes equations.
\newblock International Journal for Numerical Methods in Fluids \textbf{7}(10),
  1111--1145 (1987).
\newblock \doi{https://doi.org/10.1002/fld.1650071008}.
\newblock
  \urlprefix\url{https://onlinelibrary.wiley.com/doi/abs/10.1002/fld.1650071008}

\bibitem{kuchta_stokes_darcy}
Holter, K.E., Kuchta, M., Mardal, K.A.: Robust preconditioning for coupled
  {S}tokes-{D}arcy problems with the {D}arcy problem in primal form.
\newblock Comput. Math. Appl. \textbf{91}, 53--66 (2021).
\newblock \doi{10.1016/j.camwa.2020.08.021}.
\newblock \urlprefix\url{https://doi.org/10.1016/j.camwa.2020.08.021}

\bibitem{volker_john}
John, V.: Finite element methods for incompressible flow problems,
  \emph{Springer Series in Computational Mathematics}, vol.~51.
\newblock Springer, Cham (2016).
\newblock \doi{10.1007/978-3-319-45750-5}.
\newblock \urlprefix\url{https://doi.org/10.1007/978-3-319-45750-5}

\bibitem{PAVARINO200035}
Pavarino, L.F.: Indefinite overlapping schwarz methods for time-dependent
  stokes problems.
\newblock Computer Methods in Applied Mechanics and Engineering
  \textbf{187}(1), 35--51 (2000).
\newblock \doi{https://doi.org/10.1016/S0045-7825(99)00326-6}.
\newblock
  \urlprefix\url{https://www.sciencedirect.com/science/article/pii/S0045782599003266}

\bibitem{ISSM_iscal}
Seroussi, H., Ben~Dhia, H., Morlighem, M., Larour, E., Rignot, E., Aubry, D.:
  Coupling ice flow models of varying orders of complexity with the tiling
  method.
\newblock Journal of Glaciology \textbf{58}(210), 776–786 (2012).
\newblock \doi{10.3189/2012JoG11J195}

\bibitem{domain_decomposition_toselli2006}
Toselli, A., Widlund, O.: Domain Decomposition Methods - Algorithms and Theory.
\newblock Springer Series in Computational Mathematics. Springer Berlin
  Heidelberg (2006).
\newblock \urlprefix\url{https://books.google.se/books?id=h7EVoI2g1nkC}

\bibitem{Yosida1978}
Yosida, K.: Functional analysis, fifth edn.
\newblock Springer-Verlag, Berlin (1978).
\newblock Grundlehren der Mathematischen Wissenschaften, Band 123

\bibitem{oden_model_refinement}
Zohdi, T.I., Oden, J.T., Rodin, G.J.: Hierarchical modeling of heterogeneous
  bodies.
\newblock Comput. Methods Appl. Mech. Engrg. \textbf{138}(1-4), 273--298
  (1996).
\newblock \doi{10.1016/S0045-7825(96)01106-1}.
\newblock \urlprefix\url{https://doi.org/10.1016/S0045-7825(96)01106-1}

\end{thebibliography}
\end{document}